%
%
%
\documentclass[smallextended]{svjour3}       
%
\usepackage{mathtools}
\usepackage[normalem]{ulem} 

\usepackage{amssymb}
\usepackage{mathrsfs}
\usepackage{color}




\newcommand{\cut}[1]{{}}


\newcommand{\vb}{{\mathbf{b}}}

\newcommand{\vq}{{\mathbf{q}}}

\newcommand{\vs}{{\mathbf{s}}}
\newcommand{\vt}{{\mathbf{t}}}

\newcommand{\vx}{{\mathbf{x}}}
\newcommand{\vy}{{\mathbf{y}}}
\newcommand{\vz}{{\mathbf{z}}}

\newcommand{\vA}{{\mathbf{A}}}

\newcommand{\vD}{{\mathbf{D}}}

\newcommand{\vI}{{\mathbf{I}}}

\newcommand{\vM}{{\mathbf{M}}}

\newcommand{\vR}{{\mathbf{R}}}

\newcommand{\PD}{{\mathbf{T}_{\textnormal{PD3O}}}}
\newcommand{\vT}{{\mathbf{T}}}

\newcommand{\cL}{{\mathcal{L}}}

\newcommand{\cS}{{\mathcal{S}}}

\newcommand{\cX}{{\mathcal{X}}}


\newcommand{\vzero}{\mathbf{0}}


\newcommand{\prox}{\mathbf{prox}}

\DeclareMathOperator*{\argmin}{arg\,min}


\newcommand{\bc}{\begin{center}}
\newcommand{\ec}{\end{center}}

\newcommand{\bdm}{\begin{displaymath}}
\newcommand{\edm}{\end{displaymath}}

\newcommand{\beq}{\begin{equation}}
\newcommand{\eeq}{\end{equation}}

\newcommand{\bfl}{\begin{flushleft}}
\newcommand{\efl}{\end{flushleft}}

\newcommand{\bt}{\begin{tabbing}}
\newcommand{\et}{\end{tabbing}}

\newcommand{\beqn}{\begin{align}}
\newcommand{\eeqn}{\end{align}}

\newcommand{\beqs}{\begin{align*}} 
\newcommand{\eeqs}{\end{align*}}  


\newtheorem{assumption}{Assumption}

\smartqed  
\usepackage[colorlinks=true,breaklinks=true,bookmarks=true,urlcolor=blue,
     citecolor=blue,linkcolor=blue,bookmarksopen=false,draft=false]{hyperref}
%
%
%
%
%
\begin{document}

\title{A new primal-dual algorithm for minimizing the sum of three functions with a linear operator\thanks{The work was supported in part by the NSF grant DMS-1621798.}
}

\titlerunning{PD3O: primal-dual three-operator splitting}        

\author{Ming Yan}


\institute{M. Yan \at
              Department of Computational Mathematics, Science and Engineering;  
							Department of Mathematics; 
							Michigan State University, East Lansing, MI, USA, 48864.
							\email{myan@msu.edu}
}

\date{Received: date / Accepted: date}

\maketitle

\begin{abstract}
In this paper, we propose a new primal-dual algorithm for minimizing $f(\vx)+g(\vx)+h(\vA\vx)$, where $f$, $g$, and $h$ are proper lower semi-continuous convex functions, $f$ is differentiable with a Lipschitz continuous gradient, and $\vA$ is a bounded linear operator. 
The proposed algorithm has some famous primal-dual algorithms for minimizing the sum of two functions as special cases. E.g., it reduces to the Chambolle-Pock algorithm when $f=0$ and the proximal alternating predictor-corrector when $g=0$. 
For the general convex case, we prove the convergence of this new algorithm in terms of the distance to a fixed point by showing that the iteration is a nonexpansive operator. 
In addition, we prove the $O(1/k)$ ergodic convergence rate in the primal-dual gap. 
With additional assumptions, we derive the linear convergence rate in terms of the distance to the fixed point. 
Comparing to other primal-dual algorithms for solving the same problem, this algorithm extends the range of acceptable parameters to ensure its convergence and has a smaller per-iteration cost.
The numerical experiments show the efficiency of this algorithm.

\keywords{fixed-point iteration \and nonexpansive operator \and Chambolle-Pock \and primal-dual \and three-operator splitting}
\end{abstract}

\section{Introduction}

This paper focuses on minimizing the sum of three proper lower semi-continuous convex functions in the form of
\begin{align}\label{for:main_problem}
\vx^* \in \argmin_{\vx\in\cX} f(\vx) + g(\vx) + h\square l(\vA\vx),
\end{align}
where $\cX$ and $\cS$ are two real Hilbert spaces, $h\square l:\cS\mapsto (-\infty,+\infty]$ is the infimal convolution defined as $h\square l(\vs)=\inf_{\vt}h(\vt)+l(\vs-\vt)$, 
$\vA:\cX\mapsto \cS$ is a bounded linear operator.
$f:\cX\mapsto \vR$ and the conjugate function\footnote{The conjugate function $l^*$ of $l$ is defined as $l^*(\vs)=\sup_{\vt}\langle\vs,\vt\rangle-l(\vt)$. 
When $l(\vs)=\iota_{\{\vzero\}}(\vs)$, we have $l^*(\vs)=0$. The conjugate function of $h\square l$ is $(h\square l)^*=h^*+l^*$.} $l^*:\cS\mapsto\vR$ are differentiable with Lipschitz continuous gradients, 
 and both $g$ and $h$ are proximal, that is, the proximal mappings of $g$ and $h$ defined as
\begin{align*}
\prox_{\lambda g}(\widetilde \vx)=(\vI+\lambda\partial g)^{-1}(\widetilde\vx):=\argmin_\vx~ \lambda g(\vx)+{1\over 2}\|\vx-\widetilde\vx\|^2
\end{align*}
have analytical solutions or can be computed efficiently.
When $l(\vs)=\iota_{\{\vzero\}}(\vs)$ is the indicator function that returns zero when $\vs=\vzero$ and $+\infty$ otherwise,
the infimal convolution $h\square l=h$, and the problem~\eqref{for:main_problem} becomes
\begin{align*}
\vx^* \in \argmin_{\vx\in\cX} f(\vx) + g(\vx) + h(\vA\vx).
\end{align*}
A wide range of problems in image and signal processing, statistics and machine learning can be formulated into this form. 
Here, we give some examples.

{\bf Elastic net regularization~\cite{zou_regularization_2005}:} The elastic net combines the $\ell_1$ and $\ell_2$ penalties to overcome the limitations of both penalties.
The optimization problem is 
\begin{align*}
\textstyle \vx^*\in\argmin\limits_{\vx\in\vR^p}~ \mu_2\|\vx\|_2^2 + \mu_1\|\vx\|_1 + l(\vA\vx,\vb),
\end{align*}
where $\vA\in \vR^{n\times p}$, $\vb\in\vR^n$, and $l$ is the loss function, which may be nondifferentiable.
The $\ell_2$ regularization term $\mu_2\|\vx\|_2^2$ is differentiable and has a Lipschitz continuous gradient.

{\bf Fused lasso~\cite{tibshirani2005sparsity}:} The fused lasso was proposed for group variable selection. 
Except the $\ell_1$ penalty, it includes a new penalty term for large changes with respect to the temporal or spatial structure such that the coefficients vary in a smooth fashion. 
The problem for fused lasso with the least squares loss is 
\begin{align}
 \vx^*\in\argmin\limits_{\vx\in\vR^p} {1\over 2}\|\vA\vx-\vb\|_2^2  + \mu_1\|\vx\|_1 + \mu_2\|\vD\vx\|_1,\label{eqn:fusedlasso}
\end{align}
where $\vA\in\vR^{n\times p}$, $\vb\in\vR^n$, and 
\begin{align*}
\vD=\left[\begin{array}{ccccc}-1&1& & & \\ &-1&1& & \\ & & \dots & \dots & \\& & &-1&1\end{array}\right]\in\vR^{(p-1)\times p}.
\end{align*}

{\bf Image restoration with two regularizations:} Many image processing problems have two or more regularizations. 
For instance, in computed tomography reconstruction, nonnegative constraint and total variation regularization are applied. 
The optimization problem can be formulated as 
\begin{align*}
 \vx^*\in\argmin\limits_{\vx\in\vR^n} {1\over 2}\|\vA\vx-\vb\|_2^2  + \iota_{C}(\vx)  +  \mu\|\vD\vx\|_1,
\end{align*}
where $\vx$ is the image to be reconstructed, $\vA\in \vR^{m\times n}$ is the linear forward projection operator that maps the image to the sinogram data, $\vb\in\vR^m$ is the measured sinogram data with noise, $\iota_{C}$ is the indicator function that returns zero if $\vx\in C$ (here, $C$ is the set of nonnegative vectors in $\vR^n$) and $+\infty$ otherwise, $\vD$ is a discrete gradient operator, and the last term is the (an)isotropic total variation regularization.

Before introducing algorithms for solving~\eqref{for:main_problem}, we discuss special cases of~\eqref{for:main_problem} with only two functions.
When either $f$ or $g$ is missing, the problem~\eqref{for:main_problem} reduces to the sum of two functions, and many splitting and proximal algorithms have been proposed and studied in the literature. 
Two famous groups of methods are Alternating Direction of Multiplier Method (ADMM)~\cite{gabay1976dual,glowinski1975approximation} and primal-dual algorithms~\cite{PlayDual}. 
ADMM applied on a convex optimization problem was shown to be equivalent to Douglas-Rachford Splitting (DRS) applied on the dual problem by~\cite{gabay1983chapter}, and~\cite{yan2014self} showed recently that it is also equivalent to DRS applied on the same primal problem. 
In fact, there are many different ways to reformulate a problem into a separable convex problem with linear constraints such that ADMM can be applied, and among these ways, some are equivalent. 
However, there will be always a subproblem involving $\vA$, and it may not be solved analytically depending on the properties of $\vA$ and the way ADMM is applied. 
On the other hand, primal-dual algorithms only need the operator $\vA$ and its adjoint operator $\vA^\top$\footnote{The adjoint operator $\vA^\top$ is defined by $\langle \vs,\vA\vx\rangle_\cS = \langle \vA^\top\vs,\vx\rangle_\cX$}. 
Thus, they have been applied in a lot of applications because the subproblems would be easy to solve if the proximal mappings for both $g$ and $h$ can be computed easily. 

The primal-dual algorithms for two and three functions are reviewed by~\cite{PlayDual} and specially for image processing problems by~\cite{chambolle2016introduction}. 
When the differentiable function $f$ is missing, the primal-dual algorithm is Chambolle-Pock, (see e.g.,~\cite{pock2009algorithm,esser2010general,chambolle2011first}), while the primal-dual algorithm with $g$ missing (Primal-Dual Fixed-Point algorithm based on the Proximity Operator (PDFP$^2$O) or Proximal Alternating Predictor-Corrector (PAPC)) is proposed in~\cite{loris2011generalization,chen2013primal,Drori2015209}. 
In order to solve problem~\eqref{for:main_problem} with three functions, we can reformulate the problem and apply the primal-dual algorithms for two functions. 
E.g., we can let $\bar h([\vI;~\vA]\vx) = g(\vx)+h(\vA\vx)$ and apply PAPC or let $\bar g(\vx)=f(\vx)+g(\vx)$ and apply Chambolle-Pock. 
However, the first approach introduces more dual variables and may need more iterations to obtain the same accuracy. For the second approach, the proximal mapping of $\bar g$ may not be easy to compute, and the differentiability of $f$ is not utilized.

When all three terms are present, the problem was firstly considered in~\cite{combettes2012primal} using a forward-backward-forward scheme, where two gradient evaluations and four linear operators are needed in one iteration. 
Later,~\cite{condat2013primal} and~\cite{vu2013splitting} proposed a primal-dual algorithm for~\eqref{for:main_problem}, which we call Condat-Vu. 
It is a generalization of Chambolle-Pock by involving the differentiable function $f$ with a more restrictive range for acceptable parameters than Chambolle-Pock. 
As noted in~\cite{PlayDual}, there was no generalization of PAPC for three functions at that time. 
Later, a generalization of PAPC--a Primal-Dual Fixed-Point algorithm (PDFP)--is proposed by~\cite{chen2016primal}, in which two proximal mappings of $g$ are needed in each iteration. 
But, this algorithm has a larger range of acceptable parameters than Condat-Vu.
Recently the Asymmetric Forward-Backward-Adjoint splitting (AFBA) is proposed by~\cite{latafat2016asymmetric}, and it includes Condat-Vu and PAPC as two special cases. 
This generalization has a conservative stepsize, which is the same as Condat-Vu, but only one proximal mapping of $g$ is needed in each iteration. 

In this paper, we will give a new generalization of both Chambolle-Pock and PAPC. 
This new algorithm employs the same regions of acceptable parameters with PDFP and the same per-iteration complexity as Condat-Vu and AFBA. 
In addition, when $\vA=\vI$, it recovers the three-operator splitting scheme developed by~\cite{davis2015three}, which we call Davis-Yin.
Note that Davis-Yin is a generalization of many existing two-operator splitting schemes such as forward-backward splitting~\cite{PASSTY1979383}, backward-forward splitting~\cite{peng2016coordinate,AMTL}, Peaceman-Rachford splitting~\cite{Douglas1956NumerSol,PRS}, and forward-Douglas-Rachford splitting~\cite{FDRS}.

The proposed algorithm has the following iteration:
\begin{subequations}
\begin{align*}
\vx^{k}		& =\prox_{\gamma g} (\vz^k);\\
\vs^{k+1} & =\prox_{\delta h^*} \big((\vI-\gamma\delta \vA\vA^\top)\vs^k-{\delta}\nabla l^*(\vs^k) + {\delta}\vA\left(2\vx^k-\vz^k-{\gamma}\nabla f(\vx^k)\right)\big);\\
\vz^{k+1} & =\vx^k-{\gamma}\nabla f(\vx^k) -{\gamma}\vA^\top\vs^{k+1}.
\end{align*}
\end{subequations}
Here $\gamma$ and $\delta$ are the primal and dual stepsizes, respectively. 
The convergence of this algorithm requires conditions for both stepsizes, as shown in Theorem~\ref{thm:main}. 
Since this is a primal-dual algorithm for three functions ($h\square l$ is considered as one function) and it has connections to the three-operator splitting scheme in~\cite{davis2015three}, we name it as Primal-Dual Three-Operator splitting (PD3O).
For simplicity of notation in the following sections, we may use $(\vz,\vs)$ and $(\vz^+,\vs^+)$ for the current and next iterations. 
Therefore, one iteration of PD3O from $(\vz,\vs)$ to $(\vz^+,\vs^+)$ is
\begin{subequations}\label{for:PD3O}
\begin{align}
\vx 			& = \prox_{\gamma g} (\vz);\label{for:PD3O_iteration_a}\\
\vs^+     & =\prox_{\delta h^*} \big((\vI-\gamma\delta \vA\vA^\top)\vs-{\delta}\nabla l^*(\vs) + {\delta}\vA\left(2\vx-\vz-{\gamma}\nabla f(\vx)\right)\big); \label{for:PD3O_iteration_b}\\
\vz^+     & =\vx-{\gamma}\nabla f(\vx) -{\gamma}\vA^\top\vs^+.\label{for:PD3O_iteration_c}
\end{align}
\end{subequations}
We define one PD3O iteration as an operator $\vT_{\textnormal{PD3O}}$ and $(\vz^+,\vs^+) = \vT_{\textnormal{PD3O}}(\vz,\vs)$.

The contributions of this paper can be summarized as follows:
\begin{itemize}
\item We proposed a new primal-dual algorithm for solving an optimization problem with three functions $f(\vx)+g(\vx)+h\square l(\vA\vx)$ that recovers Chambolle-Pock and PAPC for two functions with either $f$ or $g$ missing. Comparing to three existing primal-dual algorithms for solving the same problem: Condat-Vu (\cite{condat2013primal,vu2013splitting}), AFBA (\cite{latafat2016asymmetric}), and PDFP (\cite{chen2016primal}), this new algorithm combines the advantages of all three methods: the low per-iteration complexity of Condat-Vu and AFBA and the large range of acceptable parameters for convergence of PDFP. The numerical experiments show the advantage of the proposed algorithm over these existing algorithms. 

\item We prove the convergence of the algorithm by showing that the iteration is an $\alpha$-averaged operator. This result is stronger than the result for PAPC in~\cite{chen2013primal}, where the iteration is shown to be nonexpansive only. Also, we show that Chambolle-Pock is equivalent to DRS under a different metric from the previous result that it is equivalent to a proximal point algorithm applied on the Karush-Kuhn-Tucker (KKT) conditions.
\item We show the ergodic convergence rate for the primal-dual gap. The convergent sequence is different from that in~\cite{Drori2015209} when it reduces to PAPC.

\item This new algorithm also recovers Davis-Yin for minimizing the sum of three functions and thus many splitting schemes involving two operators such as forward-backward splitting, backward-forward splitting, Peaceman-Rachford splitting, and forward-Douglas-Rachford splitting.

\item With additional assumptions on the functions, we show the linear convergence rate of PD3O.
\end{itemize}

The rest of the paper is organized as follows. 
We compare PD3O with several existing primal-dual algorithms and Davis-Yin in Section~\ref{sec:compare}. 
Then we show the convergence of PD3O for the general case and its linear convergence rate for special cases in Section~\ref{sec:general}.
The numerical experiments in Section~\ref{sec:numerical} show the effectiveness of the proposed algorithm by comparing with other existing algorithms, and finally, Section~\ref{sec:conclusion} concludes the paper with future directions.

\section{Connections to existing algorithms}\label{sec:compare}
In this section, we compare our proposed algorithm with several existing algorithms. 
In particular, we show that our proposed algorithm recovers Chambolle-Pock~\cite{chambolle2011first}, PAPC~\cite{loris2011generalization,chen2013primal,Drori2015209}, and Davis-Yin~\cite{davis2015three}.
In addition, we compare our algorithm with PDFP~\cite{chen2016primal}, Condat-Vu~\cite{condat2013primal,vu2013splitting}, and AFBA~\cite{latafat2016asymmetric}.

Before showing the connections, we reformulate our algorithm by changing the update order of the variables and introducing $\bar\vx$ to replace $\vz$ (i.e., $\bar\vx = 2\vx-\vz-\gamma\nabla f(\vx)-\gamma \vA^\top\vs$). The reformulated algorithm is 
\begin{subequations}\label{for:PD3Ov2}
\begin{align}
\vs^+     & =\prox_{\delta h^*}\left(\vs-\delta \nabla l^*(\vs) + \delta\vA\bar\vx\right);\\
\vx^+ 		& =\prox_{\gamma g} (\vx -\gamma\nabla f(\vx)-{\gamma}\vA^\top\vs^+);\\
\bar\vx^+ & =2\vx^+-\vx +\gamma \nabla f(\vx)-\gamma \nabla f(\vx^+).
\end{align}
\end{subequations}
Since the infimal convolution $h\square l$ is only considered by Vu in~\cite{vu2013splitting}. 
For simplicity, we let $l=\iota_{\{\vzero\}}$, thus $l^*=0$, for the rest of this section.

\subsection{Three special cases}
In this subsection, we show three special cases of our new algorithm: PAPC, Chambolle-Pock, and Davis-Yin. 

{\bf PAPC}: 
When $g=0$, i.e., the function $g$ is missing, we have $\vx=\vz$, and the iteration~\eqref{for:PD3O} reduces to
\begin{subequations}\label{for:PDFP2O}
\begin{align}
\vs^+     & =\prox_{\delta h^*} ((\vI-\gamma\delta \vA\vA^\top)\vs + \delta\vA\left(\vx-{\gamma}\nabla f(\vx)\right));\\
\vx^+     & =\vx-{\gamma}\nabla f(\vx) -{\gamma}\vA^\top\vs^+,
\end{align}
\end{subequations}
which is PAPC in~\cite{loris2011generalization,chen2013primal,Drori2015209}. The PAPC iteration is shown to be nonexpansive in~\cite{chen2013primal}, while we will show a stronger result that the PAPC iteration is $\alpha$-averaged with certain $\alpha\in(0,1)$ in Corollary~\ref{cor:PDFP2O}. 
In addition, we prove the convergence rate of the primal-dual gap using a different sequence from~\cite{Drori2015209}. See Section~\ref{sec:primal_dual}.

{\bf Chambolle-Pock}: 
Let $f=0$, i.e., the function $f$ is missing, then we have, from~\eqref{for:PD3Ov2},
\begin{subequations}
\begin{align}
\vs^+     & =\prox_{\delta h^*} (\vs + \delta\vA\bar\vx);\\
\vx^+ 		& =\prox_{\gamma g} (\vx -{\gamma}\vA^\top\vs^+);\\
\bar\vx^+ & =2\vx^+-\vx,
\end{align}
\end{subequations}
which is Chambolle-Pock in~\cite{chambolle2011first}. We will show that Chambolle-Pock is equivalent to a Douglas-Rachford splitting under a metric in Corollary~\ref{cor:CP}.

{\bf Davis-Yin}: 
Let $\vA=\vI$ and $\gamma\delta=1$, then we have, from~\eqref{for:PD3O},
\begin{subequations}\label{for:3O}
\begin{align}
\vx        &=\prox_{\gamma g} (\vz);\label{for:3O_iteration_a}\\
\vs^+      &= \prox_{{\delta}h^*}\left(\delta(2\vx-\vz-{\gamma}\nabla f(\vx))\right)\nonumber \\
           &={\delta}(\vI-\prox_{{\gamma} h}) \left(2\vx-\vz-{\gamma}\nabla f(\vx)\right);\label{for:3O_iteration_b} \\
\vz^+      &=\vx-{\gamma}\nabla f(\vx) -{\gamma}\vs^+.\label{for:3O_iteration_c}
\end{align}
\end{subequations}
Here we used the Moreau decomposition in~\eqref{for:3O_iteration_b}. Combining~\eqref{for:3O} together, we have 
\begin{align*}
\vz^+      =\vz+ \prox_{{\gamma} h}\left(2\prox_{\gamma g} (\vz)-\vz-{\gamma}\nabla f(\prox_{\gamma g} (\vz))\right) -\prox_{\gamma g} (\vz),
\end{align*}
which is Davis-Yin in~\cite{davis2015three}.

\subsection{Comparison with three primal-dual algorithms for three functions}
In this subsection, we compare our algorithm with three primal-dual algorithms for solving the same problem~\eqref{for:main_problem}.

{\bf PDFP}: 
The PDFP algorithm~\cite{chen2016primal} is developed as a generalization of PAPC. When $g=\iota_C$ ($C$ is a convex set in $\cX$), PDFP reduces to the Preconditioned Alternating Projection Algorithm (PAPA) proposed in~\cite{krol2012preconditioned}. 
The PDFP iteration can be expressed as follows:

\begin{subequations}\label{for:PDFP}
\begin{align}
\vs^+     &\textstyle  =\prox_{\delta h^*} \left(\vs + {\delta}\vA\bar\vx\right); \\
\vx^+     &\textstyle  =\prox_{\gamma g} (\vx-\gamma \nabla f(\vx)-\gamma\vA^\top\vs^+);\\
\bar\vx^+ &\textstyle  =\prox_{\gamma g} (\vx^+-\gamma \nabla f(\vx^+)-\gamma\vA^\top\vs^+).
\end{align}
\end{subequations}
Note that two proximal mappings of $g$ are needed in each iteration, while other algorithms only need one.

{\bf Condat-Vu}: The Condat-Vu algorithm~\cite{condat2013primal,vu2013splitting} is a generalization of the Chambolle-Pock algorithm for problem~\eqref{for:main_problem}. The iteration is
\begin{subequations}\label{for:CV}
\begin{align}
\vs^+     & =\prox_{\delta h^*} (\vs + \delta\vA\bar\vx);\\
\vx^+ 		& =\prox_{\gamma g} (\vx -\gamma\nabla f(\vx)-{\gamma}\vA^\top\vs^+);\\
\bar\vx^+ & =2\vx^+-\vx.
\end{align}
\end{subequations}
The difference between our algorithm and Condat-Vu is in the updating of $\bar\vx$. Because of the difference, our algorithm will be shown to have more freedom than Condat-Vu in choosing acceptable parameters.
Note though~\cite{vu2013splitting} considers a general form with infimal convolution, its condition for the parameters, when reduced to the case without the infimal convolution, is worse than that in~\cite{condat2013primal}. 
The correct general condition has been given in~\cite[Theorem 5]{lorenz2015inertial}.

{\bf AFBA}: AFBA is a very general operator splitting scheme, and one of its special cases~\cite[Algorithm 5]{latafat2016asymmetric} can be used to solve the problem~\eqref{for:main_problem}. 
The corresponding algorithm is
\begin{subequations}\label{for:AFBA}
\begin{align}
\vs^+     & =\prox_{\delta h^*} (\vs + \delta\vA\bar\vx); \\
\vx^+     & =\bar\vx-\gamma\vA^\top(\vs^+-\vs);\\
\bar\vx^+       & =\prox_{\gamma g} (\vx^+-\gamma \nabla f(\vx^+)-\gamma\vA^\top\vs^+).
\end{align}
\end{subequations}
The difference between AFBA and PDFP is in the update of $\vx^+$. Though AFBA needs only one proximal mapping of $g$ in each iteration, it has a more conservative range for the two parameters than PDFP and Condat-Vu.

The parameters for the four algorithms solving~\eqref{for:main_problem} and their relations to the aforementioned primal-dual algorithms for the sum of two functions are given in Table~\ref{tab:compare}. 
Here $\|\vA\vA^\top\|:=\max\limits_{\|\vs\|=1}{\|\vA\vA^\top\vs\|}$.
\begin{table}[!h]
\begin{center}
\begin{tabular}{|c|c|c|c|}\hline
									& $f\neq0,~g\neq0$	&  $f=0$  & $g=0$\\\hline
PDFP 							& $\gamma\delta\|\vA\vA^\top\|<1$; $\gamma/(2\beta)<1$	&  &PAPC \\\hline
Condat-Vu 				& $\gamma\delta \|\vA\vA^\top\|+\gamma/(2\beta)\leq 1$	&  Chambolle-Pock & \\\hline
AFBA 				& $\gamma\delta \|\vA\vA^\top\|/2+\sqrt{\gamma\delta \|\vA\vA^\top\|}/2+\gamma/(2\beta)\leq 1$	&   & PAPC\\\hline
PD3O	  & $\gamma\delta\|\vA\vA^\top\|<1$; $\gamma/(2\beta)<1$ &  Chambolle-Pock  &PAPC\\\hline
\end{tabular}
\end{center}
\caption{The comparison of convergence conditions for PDFP, Condat-Vu, AFBA, and PD3O and their reduced primal-dual algorithms.}\label{tab:compare}
\end{table}

Comparing the iterations of PDFP, Condat-Vu, and PD3O in~\eqref{for:PDFP},~\eqref{for:CV}, and~\eqref{for:PD3Ov2}, respectively, we notice that the difference is in the third step for updating $\bar\vx$. 
The third steps for these three algorithms are summarized below:
\begin{center}
\begin{tabular}{|l|l|}\hline
PDFP & $\bar\vx^+  =\prox_{\gamma g} (\vx^+-\gamma \nabla f(\vx^+)-\gamma\vA^\top\vs^+)$\\\hline
Condat-Vu & $\bar\vx^+  =2\vx^+-\vx$ \\\hline
PD3O & $\bar\vx^+ = 2\vx^+-\vx +\gamma \nabla f(\vx)-\gamma \nabla f(\vx^+)$\\\hline
\end{tabular}
\end{center}
Though there are two more terms ($\nabla f(\vx)$ and $\nabla f(\vx^+)$) in PD3O than Condat-Vu, $\nabla f(\vx)$ has been computed in the previous step, and $\nabla f(\vx^+)$ will be used in the next iteration. 
Thus, except that $\nabla f(\vx)$ has to be stored, there is no additional cost in PD3O comparing to Condat-Vu. 
However, for PDFP, the proximal operator $\prox_{\gamma g}$ is applied twice on different values in each iteration, and it will not be used in the next iteration. 
Therefore,  the per-iteration cost is more than the other three algorithms. 
When the proximal mapping is simple, the additional cost can be relatively small compared to other operations in one iteration. 

\section{The proposed primal-dual algorithm}\label{sec:general}
\subsection{Notation and preliminaries}
Let $\vI$ be the identity operator defined on a real Hilbert space. For simplicity, we do not specify the space on which it is defined when it is clear from the context.
Let $\vM={\gamma\over \delta}(\vI - \gamma\delta\vA\vA^\top)$ and $\langle \vs_1,\vs_2\rangle_\vM:=\langle \vs_1, \vM\vs_2\rangle$ for $\vs_1,~\vs_2\in\cS$. 
When $\gamma\delta$ is small enough such that $\vM$ is positive semidefinite, we define $\|\vs\|_\vM=\sqrt{\langle \vs,\vs\rangle_\vM}$ for any $\vs\in\cS$ and $\|(\vz,\vs)\|_{\vI,\vM}=\sqrt{\|\vz\|^2+\|\vs\|^2_\vM}$ for any $(\vz,\vs)\in \cX\times \cS$. 
Specially, when $\vM$ is positive definite, $\|\cdot\|_\vM$ and $\|(\cdot,\cdot)\|_{\vI,\vM}$ are two norms defined on $\cS$ and $\cX\times\cS$, respectively. 
From~\eqref{for:PD3O}, we define $\vq_{h^*}(\vs^+)\in \partial h^*(\vs^+)$ and $\vq_g(\vx)\in\partial g(\vx)$ as:
\begin{align*}
\vq_{h^*}(\vs^+):=& \textstyle{\gamma}^{-1}\vM\vs-\nabla l^*(\vs)+\vA(2\vx-\vz-{\gamma}\nabla f(\vx))-{\delta}^{-1}\vs^+ \in \partial h^*(\vs^+),\\
\vq_g(\vx):      =& \textstyle{\gamma}^{-1}(\vz-\vx)\in\partial g(\vx).
\end{align*}
An operator $\vT$ is nonexpansive if $\|\vT\vx_1 -\vT\vx_2\|\leq \|\vx_1-\vx_2\|$ for any $\vx_1$ and $\vx_2$. 
An operator $\vT$ is $\alpha$-averaged for $\alpha\in(0,1]$ if $\|\vT\vx_1 -\vT\vx_2\|^2\leq \|\vx_1-\vx_2\|^2-((1-\alpha)/\alpha)\|(\vI-\vT)\vx_1-(\vI-\vT)\vx_2\|^2$; firmly nonexpansive operators are $1/2$-averaged, and merely nonexpansive operators are $1$-averaged.

\begin{assumption}\label{assum:1}
We assume that functions $f$, $g$, $h$, and $l$ are proper lower semi-continuous convex and there exists $\beta>0$ such that 
\begin{align}
\langle \vx_1-\vx_2,\nabla f(\vx_1)-\nabla f(\vx_2)\rangle\geq & \beta\|\nabla f(\vx_1)-\nabla f(\vx_2)\|^2, \label{eq:fcoco}\\
\langle \vs_1-\vs_2,\nabla l^*(\vs_1)-\nabla l^*(\vs_2)\rangle\geq &\beta\|\nabla l^*(\vs_1)-\nabla l^*(\vs_2)\|^2_{\vM^{-1}}\label{eq:lcoco}
\end{align}
are satisfied for any $\vx_1,~\vx_2\in \cX$ and $\vs_1,~\vs_2\in\cS$. 
We assume that $\vM$ is positive definite if $\nabla l^*$ is not a constant.
When $\nabla l^*$ is a constant, the left hand side of equation~\eqref{eq:lcoco} is zero, and we say that the equation is satisfied for any positive $\beta$ and positive semidefinite $\vM$. 
Therefore, when $\nabla l^*$ is a constant, we only assume that $\vM$ is positive semidefinite.
\end{assumption}
Equation~\eqref{eq:fcoco} is satisfied if and only if $f$ has a $1/\beta$ Lipschitz continuous gradient when $f$ is proper lower semi-continuous convex (see \cite[Theorem 18.15]{bauschke2011convex}). 
We assume~\eqref{eq:lcoco} with $\vM$ for the simplicity of the results. 
The results are valid but complicated when $\vM$ is replaced by $\vI$ in~\eqref{eq:lcoco}.
When $\nabla l^*$ is Lipschitz continuous, we can always find $\beta$ such that equation~\eqref{eq:lcoco} is satisfied if $\vM$ is positive definite.  
When $\det(\vM)=0$, we consider the case with $\nabla l^*$ being a constant only.

\begin{lemma}\label{lemma4gap}
If Assumption~\ref{assum:1} is satisfied, we have
\begin{align}
f(\vx_2)-f(\vx_1) \leq &\langle \nabla f(\vx_2),\vx_2-\vx_1\rangle  -{\beta\over2} \|\nabla f(\vx_2)-\nabla f(\vx_1)\|^2, \label{lemma1a}\\
l^*(\vs_1)-l^*(\vs_2) \leq & \langle \nabla l^*(\vs_2),\vs_1-\vs_2\rangle+{(2\beta)^{-1}}\|\vs_1-\vs_2\|_\vM^2. \label{lemma1b}
\end{align}
\end{lemma}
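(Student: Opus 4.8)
The plan is to obtain both estimates from the Baillon--Haddad equivalence recalled just after Assumption~\ref{assum:1} (a proper lsc convex function has a $1/\beta$-Lipschitz gradient iff that gradient is $\beta$-cocoercive), combined with the classical descent lemma and a one-line minimization.

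\textbf{Inequality~\eqref{lemma1a}.} Fix $\vx_1,\vx_2$ and set $\varphi(\vx):=f(\vx)-\langle\nabla f(\vx_2),\vx\rangle$. Then $\varphi$ is proper lsc convex, its gradient $\nabla\varphi=\nabla f-\nabla f(\vx_2)$ satisfies the same cocoercivity bound~\eqref{eq:fcoco} (the additive constant cancels in every difference), hence $\nabla\varphi$ is $1/\beta$-Lipschitz, and $\nabla\varphi(\vx_2)=\vzero$ so $\vx_2\in\argmin\varphi$. The descent lemma gives $\varphi(\vy)\le\varphi(\vx_1)+\langle\nabla\varphi(\vx_1),\vy-\vx_1\rangle+\tfrac1{2\beta}\|\vy-\vx_1\|^2$ for every $\vy$; taking $\vy=\vx_1-\beta\nabla\varphi(\vx_1)$ (the minimizer of the right-hand side) and using $\varphi(\vx_2)\le\varphi(\vy)$ yields $\varphi(\vx_2)\le\varphi(\vx_1)-\tfrac\beta2\|\nabla\varphi(\vx_1)\|^2$. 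Substituting $\nabla\varphi(\vx_1)=\nabla f(\vx_1)-\nabla f(\vx_2)$ and unwinding the definition of $\varphi$ gives~\eqref{lemma1a}.

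\textbf{Inequality~\eqref{lemma1b}.} If $\nabla l^*$ is constant then $l^*$ is affine, the first two terms on the right-hand side equal the left-hand side, and the remaining term $\tfrac1{2\beta}\|\vs_1-\vs_2\|_\vM^2\ge0$ since $\vM$ is positive semidefinite; so assume $\nabla l^*$ is not constant, whence $\vM\succ\vzero$. The obstacle here is the norm mismatch: \eqref{eq:lcoco} measures gradient differences in the $\vM^{-1}$-norm, while the target quadratic lives in the $\vM$-norm. I resolve this by the symmetric change of variables $\vu=\vM^{1/2}\vs$: put $\vN=\vM^{1/2}\succ\vzero$ and $\psi(\vu):=l^*(\vN^{-1}\vu)$, which is convex with $\nabla\psi(\vu)=\vN^{-1}\nabla l^*(\vN^{-1}\vu)$. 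For $\vs_i=\vN^{-1}\vu_i$ one checks $\langle\vs_1-\vs_2,\nabla l^*(\vs_1)-\nabla l^*(\vs_2)\rangle=\langle\vu_1-\vu_2,\nabla\psi(\vu_1)-\nabla\psi(\vu_2)\rangle$ and $\|\nabla l^*(\vs_1)-\nabla l^*(\vs_2)\|_{\vM^{-1}}^2=\|\vN^{-1}(\nabla l^*(\vs_1)-\nabla l^*(\vs_2))\|^2=\|\nabla\psi(\vu_1)-\nabla\psi(\vu_2)\|^2$, so~\eqref{eq:lcoco} says precisely that $\nabla\psi$ is $\beta$-cocoercive in the standard metric. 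Hence $\nabla\psi$ is $1/\beta$-Lipschitz and the descent lemma gives $\psi(\vu_1)\le\psi(\vu_2)+\langle\nabla\psi(\vu_2),\vu_1-\vu_2\rangle+\tfrac1{2\beta}\|\vu_1-\vu_2\|^2$. Finally $\psi(\vu_i)=l^*(\vs_i)$, $\langle\nabla\psi(\vu_2),\vu_1-\vu_2\rangle=\langle\nabla l^*(\vs_2),\vs_1-\vs_2\rangle$, and $\|\vu_1-\vu_2\|^2=\|\vs_1-\vs_2\|_\vM^2$, and rearranging yields~\eqref{lemma1b}.

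The only nonroutine point is recognizing that $\vu=\vM^{1/2}\vs$ is exactly the substitution that simultaneously turns the $\vM^{-1}$-cocoercivity of $\nabla l^*$ into plain cocoercivity of $\nabla\psi$ and the plain Euclidean descent quadratic back into the $\vM$-quadratic; after that, both parts are bookkeeping around the descent lemma and Baillon--Haddad. One should also confirm that the degenerate case $\det\vM=0$ is genuinely covered by Assumption~\ref{assum:1}'s stipulation that $\nabla l^*$ is then constant, which is what legitimizes the affine argument above.
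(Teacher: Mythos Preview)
Your proof is correct and follows essentially the same approach as the paper: for~\eqref{lemma1a} both you and the paper subtract off the linear part $\langle\nabla f(\vx_2),\cdot\rangle$ and then minimize the descent-lemma upper bound; for~\eqref{lemma1b} both use the substitution $\vu=\vM^{1/2}\vs$ to turn the $\vM^{-1}$-cocoercivity of $\nabla l^*$ into ordinary $1/\beta$-Lipschitzness of $\nabla\psi$, after which you apply the descent lemma directly while the paper equivalently uses convexity of $(2\beta)^{-1}\|\cdot\|^2-\psi$. The handling of the degenerate case $\nabla l^*=\text{const}$ is also the same.
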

\begin{proof}
Given $\vx_2$, the function $f_{\vx_2}(\vx):=f(\vx)-\nabla f(\vx_2)^\top\vx$ is convex and has a Lipschitz continuous gradient with constant $1/\beta$. 
In addition, $\vx_2$ is a minimum of $f_{\vx_2}(\vx)$. Therefore, we have 
\begin{align}
f_{\vx_2}(\vx_2)\leq & \inf_{\vx\in\cX} f(\vx_1)-\nabla f(\vx_2)^\top\vx_1 +\langle \nabla f(\vx_1)-\nabla f(\vx_2),\vx-\vx_1\rangle + {1\over2\beta}\|\vx-\vx_1\|^2 \nonumber\\
= & f(\vx_1)-\nabla f(\vx_2)^\top\vx_1 -{\beta\over2}\|\nabla f(\vx_2)-\nabla f(\vx_1)\|^2,
\end{align}
which gives
\begin{align}
f(\vx_2)- f(\vx_1)\leq \langle\nabla f(\vx_2),\vx_2-\vx_1\rangle-{\beta\over2}\|\nabla f(\vx_2)-\nabla f(\vx_1)\|^2.
\end{align}
When $\vM$ is positive definite, the inequality~\eqref{eq:lcoco} implies the Lipschitz continuity of $ \vM^{-1/2} \nabla l^*(\vM^{-1/2}\cdot)$ with constant $1/\beta$.
Then the function $(2\beta)^{-1}\hat\vs^\top\hat\vs-l^*(\vM^{-1/2}\hat\vs)$ is convex. 
Therefore we have 
\begin{align*}
{(2\beta)^{-1}}\hat\vs_1^\top\hat\vs_1-l^*(\vM^{-1/2}\hat\vs_1) \geq &{(2\beta)^{-1}}\hat\vs_2^\top\hat\vs_2-l^*(\vM^{-1/2}\hat\vs_2) \\
&+\left\langle {\beta^{-1}}\hat\vs_2-\vM^{-1/2}\nabla l^*(\vM^{-1/2}\hat\vs_2),\hat\vs_1-\hat\vs_2 \right\rangle
\end{align*}
Let $\vs_1=\vM^{-1/2}\hat\vs_1$ and $\vs_2=\vM^{-1/2}\hat\vs_2$, then we have
\begin{align*}
l^*(\vs_1) -l^*(\vs_2) \leq \langle\nabla l^*(\vs_2),\vs_1-\vs_2 \rangle+{(2\beta)^{-1}}\|\vs_1-\vs_2\|_\vM^2.
\end{align*}
When $\nabla l^*$ is a constant,~\eqref{lemma1b} is satisfied for any positive semidefinite $\vM$.
\qed
\end{proof}

\begin{lemma}[fundamental equality]\label{lemma:fundamental} 
Consider the PD3O iteration~\eqref{for:PD3O} with two inputs $(\vz_1,\vs_1)$ and $(\vz_2,\vs_2)$. 
Let $(\vz_1^+,\vs_1^+)=\PD(\vz_1,\vs_1)$ and $(\vz_2^+,\vs_2^+)=\PD(\vz_2,\vs_2)$. 
We also define 
\begin{align*}
\vq_{h^*}(\vs_1^+):=& \textstyle {\gamma}^{-1}\vM\vs_1-\nabla l^*(\vs_1)+\vA(2\vx_1-\vz_1-{\gamma}\nabla f(\vx_1))-{\delta}^{-1}\vs_1^+ \in \partial h^*(\vs_1^+),\\
\vq_g(\vx_1):=  & \textstyle {\gamma}^{-1}(\vz_1-\vx_1)\in\partial g(\vx_1),\\
\vq_{h^*}(\vs_2^+):=& \textstyle {\gamma}^{-1}\vM\vs_2-\nabla l^*(\vs_2)+\vA(2\vx_2-\vz_2-{\gamma}\nabla f(\vx_2))-{\delta}^{-1}\vs_2^+ \in \partial h^*(\vs_2^+),\\
\vq_g(\vx_2):=  & \textstyle {\gamma}^{-1}(\vz_2-\vx_2)\in\partial g(\vx_2). 
\end{align*}
Then, we have 
\begin{align}
			& 2\gamma\langle \vs_1^+-\vs_2^+,\vq_{h^*}(\vs_1^+)-\vq_{h^*}(\vs_2^+)+\nabla l^*(\vs_1)-\nabla l^*(\vs_2)\rangle \nonumber\\
			& +2\gamma\langle \vx_1-\vx_2,\vq_g(\vx_1)-\vq_g(\vx_2)+\nabla f(\vx_1)-\nabla f(\vx_2)\rangle\nonumber\\
	=   & \|(\vz_1,\vs_1)-(\vz_2,\vs_2)\|_{\vI,\vM}^2-\|(\vz_1^+,\vs_1^+)-(\vz_2^+,\vs_2^+)\|_{\vI,\vM}^2 -\|\vz_1-\vz_1^+-\vz_2+\vz_2^+\|^2 \nonumber\\
	    & -\|\vs_1-\vs_1^+-\vs_2+\vs_2^+\|_\vM^2 +2\gamma\langle \nabla f(\vx_1)-\nabla f(\vx_2), \vz_1-\vz_1^+-\vz_2+\vz_2^+\rangle. \label{eqn:fundamental_1}
\end{align}
\end{lemma}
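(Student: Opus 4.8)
Equation \eqref{eqn:fundamental_1} is a purely algebraic identity for the map $\PD$: the elements $\vq_{h^*}$ and $\vq_g$ are the specific residuals produced by the two proximal steps, and one never uses convexity or the cocoercivity inequalities of Assumption~\ref{assum:1} beyond the mere existence of these subgradients. The plan is therefore to (i) read off the three defining relations of \eqref{for:PD3O} for each of the two inputs, (ii) substitute them into the left-hand side of \eqref{eqn:fundamental_1} to turn it into a sum of inner products in the iterate differences, (iii) collapse the squared-norm terms on the right-hand side via the polarization identity, and (iv) match the two sides by a bookkeeping cancellation that closes only because of the definition of $\vM$.

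\textbf{Step 1 (defining relations).} From $\vx_i=\prox_{\gamma g}(\vz_i)$ one has $\vq_g(\vx_i)=\gamma^{-1}(\vz_i-\vx_i)$. Since $\gamma^{-1}\vM=\delta^{-1}\vI-\gamma\vA\vA^\top$ by the definition $\vM=\tfrac{\gamma}{\delta}(\vI-\gamma\delta\vA\vA^\top)$, the optimality condition of the $\prox_{\delta h^*}$ step in \eqref{for:PD3O_iteration_b} is exactly $\vq_{h^*}(\vs_i^+)\in\partial h^*(\vs_i^+)$ with the stated formula, and \eqref{for:PD3O_iteration_c} reads $\vz_i^+=\vx_i-\gamma\nabla f(\vx_i)-\gamma\vA^\top\vs_i^+$; combining the first and third gives $\vz_i-\vz_i^+=\gamma\big(\vq_g(\vx_i)+\nabla f(\vx_i)+\vA^\top\vs_i^+\big)$. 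Subtracting the $i=1$ and $i=2$ copies produces the two linear relations I will use: $\vq_g(\vx_1)-\vq_g(\vx_2)+\nabla f(\vx_1)-\nabla f(\vx_2)=\gamma^{-1}\big[(\vz_1-\vz_1^+)-(\vz_2-\vz_2^+)\big]-\vA^\top(\vs_1^+-\vs_2^+)$, and $\vq_{h^*}(\vs_1^+)-\vq_{h^*}(\vs_2^+)+\nabla l^*(\vs_1)-\nabla l^*(\vs_2)=\gamma^{-1}\vM(\vs_1-\vs_2)+\vA\big(2(\vx_1-\vx_2)-(\vz_1-\vz_2)-\gamma(\nabla f(\vx_1)-\nabla f(\vx_2))\big)-\delta^{-1}(\vs_1^+-\vs_2^+)$, together with $\vz_i^+=\vx_i-\gamma\nabla f(\vx_i)-\gamma\vA^\top\vs_i^+$.

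\textbf{Steps 2--3 (expand LHS, collapse RHS).} Substituting the two relations into the left side of \eqref{eqn:fundamental_1} and absorbing the $\vM(\vs_1-\vs_2)$ term into $\langle\cdot,\cdot\rangle_\vM$, the LHS becomes $2\langle\vs_1^+-\vs_2^+,\vs_1-\vs_2\rangle_\vM-2\gamma\delta^{-1}\|\vs_1^+-\vs_2^+\|^2+2\gamma\langle\vA^\top(\vs_1^+-\vs_2^+),(\vx_1-\vx_2)-(\vz_1-\vz_2)-\gamma(\nabla f(\vx_1)-\nabla f(\vx_2))\rangle+2\langle\vx_1-\vx_2,(\vz_1-\vz_1^+)-(\vz_2-\vz_2^+)\rangle$, where the $\vA^\top(\vs_1^+-\vs_2^+)$ pieces are collected using $2\Delta\vx-\Delta\vx=\Delta\vx$. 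On the RHS, I apply $\|a\|^2-\|b\|^2-\|a-b\|^2=2\langle a-b,b\rangle$ once in $\|\cdot\|$ (with $a=\vz_1-\vz_2$, $b=\vz_1^+-\vz_2^+$) and once in $\|\cdot\|_\vM$ (on the $\vs$-block); combining with the $2\gamma\langle\nabla f(\vx_1)-\nabla f(\vx_2),\cdot\rangle$ term and $\vz_i^+ +\gamma\nabla f(\vx_i)=\vx_i-\gamma\vA^\top\vs_i^+$ turns the RHS into $2\langle(\vz_1-\vz_1^+)-(\vz_2-\vz_2^+),(\vx_1-\vx_2)-\gamma\vA^\top(\vs_1^+-\vs_2^+)\rangle+2\langle(\vs_1-\vs_1^+)-(\vs_2-\vs_2^+),\vs_1^+-\vs_2^+\rangle_\vM$.

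\textbf{Step 4 (cancellation) and the main obstacle.} Subtracting, the $\langle\cdot,\vx_1-\vx_2\rangle$ terms cancel, the two $\langle\cdot,\cdot\rangle_\vM$ terms combine to $2\|\vs_1^+-\vs_2^+\|_\vM^2$, and the surviving $\vA$-terms collapse via $(\vx_1-\vx_2)-\gamma(\nabla f(\vx_1)-\nabla f(\vx_2))-(\vz_1^+-\vz_2^+)=\gamma\vA^\top(\vs_1^+-\vs_2^+)$ into $2\gamma^2\langle\vs_1^+-\vs_2^+,\vA\vA^\top(\vs_1^+-\vs_2^+)\rangle-2\gamma\delta^{-1}\|\vs_1^+-\vs_2^+\|^2$, which is precisely $-2\|\vs_1^+-\vs_2^+\|_\vM^2$ by the definition of $\vM$; hence LHS$-$RHS$=0$. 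The one thing to watch is the bookkeeping: there are many cross terms carrying $\vA$, $\vA^\top$, and $\vM$, and the proof only closes because of the two structural identities $\gamma^{-1}\vM=\delta^{-1}\vI-\gamma\vA\vA^\top$ and $\gamma\vA^\top(\vs_1^+-\vs_2^+)=(\vx_1-\vx_2)-\gamma(\nabla f(\vx_1)-\nabla f(\vx_2))-(\vz_1^+-\vz_2^+)$ coupling the primal and dual updates; keeping the $\|\cdot\|_\vM$ weighting consistent (never silently replacing it by $\|\cdot\|$) is the only delicate point.
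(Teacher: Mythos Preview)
Your proof is correct and follows essentially the same route as the paper: both are pure algebra driven by the three update relations \eqref{for:PD3O_iteration_a}--\eqref{for:PD3O_iteration_c}, the polarization identity $2\langle a,b\rangle=\|a+b\|^2-\|a\|^2-\|b\|^2$, and the structural identity $\gamma^{-1}\vM=\delta^{-1}\vI-\gamma\vA\vA^\top$. The only organizational difference is that the paper substitutes $2\vx_i-\vz_i-\gamma\nabla f(\vx_i)=\vz_i^+-\vz_i+\vx_i+\gamma\vA^\top\vs_i^+$ at the outset so the $\vM$--cancellation happens implicitly en route to the intermediate form $2\langle\vs_1^+-\vs_2^+,\vs_1-\vs_1^+-\vs_2+\vs_2^+\rangle_\vM+2\langle\vz_1^++\gamma\nabla f(\vx_1)-\vz_2^+-\gamma\nabla f(\vx_2),\vz_1-\vz_1^+-\vz_2+\vz_2^+\rangle$, whereas you expand LHS and RHS separately to meet at that same expression and verify the $\vM$--cancellation explicitly in Step~4.
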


\begin{proof}
We consider the two terms on the left hand side of~\eqref{eqn:fundamental_1} separately. 
For the first term, we have
\begin{align}
      & 2\gamma\langle \vs_1^+-\vs_2^+,\vq_{h^*}(\vs_1^+)-\vq_{h^*}(\vs_2^+) +\nabla l^*(\vs_1)-\nabla l^*(\vs_2) \rangle\nonumber\\
	=		&\textstyle  2\gamma\langle \vs_1^+-\vs_2^+,{\gamma}^{-1}\vM\vs_1-\nabla l^*(\vs_1)+\vA(\vz_1^+-\vz_1+\vx_1+\gamma\vA^\top\vs_1^+)-{\delta}^{-1}\vs_1^+\rangle\nonumber\\
	 		&\textstyle -2\gamma\langle \vs_1^+-\vs_2^+,{\gamma}^{-1}\vM\vs_2-\nabla l^*(\vs_2)+\vA(\vz_2^+-\vz_2+\vx_2+\gamma\vA^\top\vs_2^+)-{\delta}^{-1}\vs_2^+\rangle\nonumber\\
			& +2\gamma\langle \vs_1^+-\vs_2^+,\nabla l^*(\vs_1)-\nabla l^*(\vs_2) \rangle \nonumber\\
	=		& 2\langle\vs_1^+-\vs_2^+,\vs_1-\vs_1^+-\vs_2+\vs_2^+\rangle_\vM\nonumber\\
	    & + 2\gamma\langle \vA^\top(\vs_1^+-\vs_2^+),\vz_1^+-\vz_1+\vx_1-\vz_2^++\vz_2-\vx_2\rangle. \label{eq:equal_h}
\end{align}
The updates of $\vz_1^+$ and $\vz_2^+$ in~\eqref{for:PD3O_iteration_c} show
\begin{align}
      & 2\gamma\langle \vx_1-\vx_2,\vq_g(\vx_1)-\vq_g(\vx_2)+\nabla f(\vx_1)-\nabla f(\vx_2)\rangle\nonumber\\
	=		& 2\gamma\langle \vx_1-\vx_2,{\gamma}^{-1}(\vz_1-\vx_1)-{\gamma}^{-1}(\vz_2-\vx_2)+\nabla f(\vx_1)-\nabla f(\vx_2)\rangle\nonumber\\
	=		& 2\langle \vx_1-\vx_2,\vz_1-\vx_1+\gamma\nabla f(\vx_1)-\vz_2+\vx_2-\gamma\nabla f(\vx_2)\rangle\nonumber\\
	=		& 2\langle \vx_1-\vx_2,\vz_1-\vz_1^+-\gamma\vA^\top\vs_1^+ -\vz_2+\vz_2^++\gamma \vA^\top\vs_2^+\rangle. \label{eq:equal_fg}
\end{align}
Combining both~\eqref{eq:equal_h} and~\eqref{eq:equal_fg}, we have 
\begin{align}
      & 2\gamma\langle \vs_1^+-\vs_2^+,\vq_{h^*}(\vs_1^+)-\vq_{h^*}(\vs_2^+)+\nabla l^*(\vs_1)-\nabla l^*(\vs_2)\rangle \nonumber\\
			& +2\gamma\langle \vx_1-\vx_2,\vq_g(\vx_1)-\vq_g(\vx_2)+\nabla f(\vx_1)-\nabla f(\vx_2)\rangle \nonumber\\
	=   & 2\langle\vs_1^+-\vs_2^+,\vs_1-\vs_1^+-\vs_2+\vs_2^+\rangle_\vM + 2\gamma\langle \vA^\top(\vs_1^+-\vs_2^+),\vz_1^+-\vz_1-\vz_2^++\vz_2\rangle \nonumber\\
			& + 2\gamma\langle \vA^\top(\vs_1^+-\vs_2^+),\vx_1-\vx_2\rangle - 2\langle \vx_1-\vx_2,\gamma\vA^\top\vs_1^+ -\gamma \vA^\top\vs_2^+\rangle \nonumber\\
			& +2\langle \vx_1-\vx_2,\vz_1-\vz_1^+ -\vz_2+\vz_2^+\rangle \nonumber\\
	=   & 2\langle\vs_1^+-\vs_2^+,\vs_1-\vs_1^+-\vs_2+\vs_2^+\rangle_\vM \nonumber\\
			& +2\langle \vx_1 -\gamma\vA^\top\vs_1^+-\vx_2+\gamma\vA^\top\vs_2^+,\vz_1-\vz_1^+ -\vz_2+\vz_2^+\rangle \nonumber\\
	=   & 2\langle\vs_1^+-\vs_2^+,\vs_1-\vs_1^+-\vs_2+\vs_2^+\rangle_\vM \nonumber\\
			& +2\langle \vz_1^+ +\gamma\nabla f(\vx_1)-\vz_2^+-\gamma\nabla f(\vx_2),\vz_1-\vz_1^+ -\vz_2+\vz_2^+\rangle \label{eqn:gapfunction}\\
  =   & \|(\vz_1,\vs_1)-(\vz_2,\vs_2)\|_{\vI,\vM}^2-\|(\vz_1^+,\vs_1^+)-(\vz_2^+,\vs_2^+)\|_{\vI,\vM}^2 -\|\vz_1-\vz_1^+-\vz_2+\vz_2^+\|^2 \nonumber\\
	    & -\|\vs_1-\vs_1^+-\vs_2+\vs_2^+\|_{\vM}^2 +2\gamma\langle \nabla f(\vx_1)-\nabla f(\vx_2), \vz_1-\vz_1^+-\vz_2+\vz_2^+\rangle, \nonumber
\end{align}
where the third equality comes from the updates of $\vz_1^+$ and $\vz_2^+$ in~\eqref{for:PD3O_iteration_c} and the last equality holds because of $2\langle a,b\rangle =\|a+b\|^2-\|a\|^2-\|b\|^2$. 
\qed
\end{proof}
Note that Lemma~\ref{lemma:fundamental} only requires the symmetry of $\vM$.

\subsection{Convergence analysis for the general convex case}
In this section, we show the convergence of the proposed algorithm in Theorem~\ref{thm:main}. We show firstly that the operator $\PD$ is a nonexpansive operator (Lemma~\ref{lemma:nonexpansive}) and then finding a fixed point $(\vz^*,\vs^*)$ of $\PD$ is equivalent to finding an optimal solution to~\eqref{for:main_problem} (Lemma~\ref{lemma:optimal_solution}).

\begin{lemma}\label{lemma:nonexpansive}
Let $(\vz_1^+,\vs_1^+)=\PD(\vz_1,\vs_1)$ and $(\vz_2^+,\vs_2^+)=\PD(\vz_2,\vs_2)$. 
Under Assumption~\ref{assum:1},
we have
\begin{align}
       & \|(\vz_1^+,\vs_1^+)-(\vz_2^+,\vs_2^+)\|_{\vI,\vM}^2 -\|(\vz_1,\vs_1)-(\vz_2,\vs_2)\|_{\vI,\vM}^2 \nonumber\\
\leq 	 & -{2\beta-\gamma\over2\beta}\left(\|(\vz_1^+,\vs_1^+)-(\vz_1,\vs_1)-(\vz_2^+,\vs_2^+)+(\vz_2,\vs_2)\|_{\vI,\vM}^2\right).\label{eq:average}
\end{align}
Furthermore, when $\vM$ is positive definite, the operator $\PD$ in~\eqref{for:PD3O} is nonexpansive for $(\vz,\vs)$ if $\gamma\leq 2\beta$. 
More specifically, it is $\alpha$-averaged with $\alpha={2\beta\over 4\beta-\gamma}$.
\end{lemma}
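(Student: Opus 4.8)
The plan is to start from the fundamental equality in Lemma~\ref{lemma:fundamental} and bound the left-hand side from below using convexity and cocoercivity, then absorb the troublesome cross term on the right-hand side by completing squares. Concretely, the left-hand side of~\eqref{eqn:fundamental_1} is $2\gamma$ times an inner product of subgradient differences plus gradient differences. Since $\vq_{h^*}(\vs_i^+)\in\partial h^*(\vs_i^+)$ and $\vq_g(\vx_i)\in\partial g(\vx_i)$, monotonicity of $\partial h^*$ and $\partial g$ gives $\langle\vs_1^+-\vs_2^+,\vq_{h^*}(\vs_1^+)-\vq_{h^*}(\vs_2^+)\rangle\ge 0$ and $\langle\vx_1-\vx_2,\vq_g(\vx_1)-\vq_g(\vx_2)\rangle\ge 0$. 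For the remaining pieces I would use~\eqref{eq:fcoco} to get $\langle\vx_1-\vx_2,\nabla f(\vx_1)-\nabla f(\vx_2)\rangle\ge\beta\|\nabla f(\vx_1)-\nabla f(\vx_2)\|^2$ and~\eqref{eq:lcoco} to get $\langle\vs_1-\vs_2,\nabla l^*(\vs_1)-\nabla l^*(\vs_2)\rangle\ge\beta\|\nabla l^*(\vs_1)-\nabla l^*(\vs_2)\|_{\vM^{-1}}^2$; the latter is not quite the term appearing on the left (which involves $\vs_1^+-\vs_2^+$, not $\vs_1-\vs_2$), so I would write $\vs_1^+-\vs_2^+=(\vs_1-\vs_2)-(\vs_1-\vs_1^+-\vs_2+\vs_2^+)$ and split accordingly. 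Thus the left-hand side is at least (a nonnegative combination) minus terms I then move to the right.

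After this reduction, the target inequality~\eqref{eq:average} becomes a statement that
\begin{align*}
0\le{\gamma\over 2\beta}\left(\|\vz_1-\vz_1^+-\vz_2+\vz_2^+\|^2+\|\vs_1-\vs_1^+-\vs_2+\vs_2^+\|_\vM^2\right)
\end{align*}
plus the gradient-difference terms I have gained, dominates the cross term $2\gamma\langle\nabla f(\vx_1)-\nabla f(\vx_2),\vz_1-\vz_1^+-\vz_2+\vz_2^+\rangle$ together with the $\beta$-coefficient squares. The natural device is the Peter--Paul inequality $2\gamma\langle a,b\rangle\le{\gamma^2\over\varepsilon}\|a\|^2+\varepsilon\|b\|^2$ applied with $a=\nabla f(\vx_1)-\nabla f(\vx_2)$, $b=\vz_1-\vz_1^+-\vz_2+\vz_2^+$, choosing $\varepsilon$ so that the $\|b\|^2$ part matches the $\gamma/(2\beta)$-weighted $\|\vz\text{-diff}\|^2$ term (i.e.\ roughly $\varepsilon=\gamma/(2\beta)$ adjusted by a factor, since the coefficient on $\|b\|^2$ in the identity is $1$ and we want to leave a $(2\beta-\gamma)/(2\beta)$ remainder) and the $\|a\|^2$ part is paid for by the $\beta\|\nabla f(\vx_1)-\nabla f(\vx_2)\|^2$ gained from cocoercivity. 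I expect the bookkeeping to work out exactly because $f$ contributes both a $\beta\|\nabla f\|^2$ credit and couples only to the $\vz$-difference, and similarly $l^*$ couples only to the $\vs$-difference in the $\vM$-norm — this is precisely why Assumption~\ref{assum:1} uses the $\vM^{-1}$ norm in~\eqref{eq:lcoco}.

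Once~\eqref{eq:average} is established, the averaged-operator conclusion is essentially definitional: writing $\vT=\PD$ and $\vu_i=(\vz_i,\vs_i)$, inequality~\eqref{eq:average} reads $\|\vT\vu_1-\vT\vu_2\|_{\vI,\vM}^2-\|\vu_1-\vu_2\|_{\vI,\vM}^2\le-{2\beta-\gamma\over2\beta}\|(\vI-\vT)\vu_1-(\vI-\vT)\vu_2\|_{\vI,\vM}^2$. Comparing with the definition of $\alpha$-averaged (in the $\|\cdot\|_{\vI,\vM}$ metric, which is a genuine norm when $\vM$ is positive definite), I need $(1-\alpha)/\alpha=(2\beta-\gamma)/(2\beta)$, which solves to $\alpha=2\beta/(4\beta-\gamma)$; this lies in $(0,1)$ exactly when $\gamma<2\beta$, and $\gamma=2\beta$ gives $\alpha=1$, i.e.\ mere nonexpansiveness. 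I would close by noting that $\vM$ positive definite is what makes $\|(\cdot,\cdot)\|_{\vI,\vM}$ a norm so that "nonexpansive'' and "$\alpha$-averaged'' have their usual meaning.

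The main obstacle I anticipate is the precise matching of constants in the Peter--Paul step: one must verify that the coefficient $\gamma/(2\beta)$ (not something larger) suffices, i.e.\ that the credit $\beta\|\nabla f(\vx_1)-\nabla f(\vx_2)\|^2$ is exactly enough to cancel the ${\gamma^2/\varepsilon}$ term after fixing $\varepsilon$ from the $\vz$-norm side. A secondary subtlety is handling the $l^*$ term correctly: after substituting $\vs_1^+-\vs_2^+=(\vs_1-\vs_2)-(\vs_1-\vs_1^+-\vs_2+\vs_2^+)$ into $2\gamma\langle\vs_1^+-\vs_2^+,\nabla l^*(\vs_1)-\nabla l^*(\vs_2)\rangle$, one cross term is controlled by~\eqref{eq:lcoco} and the other by a Peter--Paul bound against the $\gamma/(2\beta)$-weighted $\|\vs\text{-diff}\|_\vM^2$ term — mirroring the $f$ computation with $\vM$ inserted — and the degenerate case $\nabla l^*$ constant must be checked separately (then $\vM$ need only be positive semidefinite and the $l^*$ terms simply vanish).
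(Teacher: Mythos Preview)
Your proposal is correct and follows essentially the same route as the paper: start from the fundamental equality, drop the $\vq_{h^*}$ and $\vq_g$ inner products by monotonicity, split $\vs_1^+-\vs_2^+=(\vs_1-\vs_2)-(\vs_1-\vs_1^+-\vs_2+\vs_2^+)$ to access~\eqref{eq:lcoco}, and then apply Cauchy--Schwarz/Peter--Paul with parameter $\epsilon=\gamma/(2\beta)$ exactly (no adjustment needed) so that the $\gamma^2/\epsilon-2\gamma\beta$ coefficient on the gradient-difference squares vanishes. The $\alpha$-averaged conclusion is then indeed immediate from the definition once $\vM$ is positive definite.
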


\begin{proof}
Because of the convexity of $h^*$ and $g$, we have
\begin{align*}
0\leq & \langle \vs_1^+-\vs_2^+,\vq_{h^*}(\vs_1^+)-\vq_{h^*}(\vs_2^+)\rangle
 +\langle \vx_1-\vx_2,\vq_g(\vx_1)-\vq_g(\vx_2)\rangle,
\end{align*}
where $\vq_{h^*}(\vs_1^+)$, $\vq_{h^*}(\vs_1^+)$, $\vq_g(\vx_1)$, and $\vq_g(\vx_2)$ are defined in Lemma~\ref{lemma:fundamental}.
Then, the fundamental equality~\eqref{eqn:fundamental_1} in Lemma~\ref{lemma:fundamental} gives
\begin{align}
      & \|(\vz_1^+,\vs_1^+)-(\vz_2^+,\vs_2^+)\|_{\vI,\vM}^2 - \|(\vz_1,\vs_1)-(\vz_2,\vs_2)\|_{\vI,\vM}^2 \nonumber\\
\leq	& 2\gamma\langle \nabla f(\vx_1)-\nabla f(\vx_2), \vz_1-\vz_1^+-\vz_2+\vz_2^+\rangle -2\gamma\langle \nabla f(\vx_1)-\nabla f(\vx_2), \vx_1-\vx_2\rangle\nonumber\\
      & -2\gamma\langle \vs_1^+-\vs_2^+,\nabla l^*(\vs_1)-\nabla l^*(\vs_2)\rangle \label{eq:nonexpansive_cp}\\
			& \textstyle - \|\vz_1-\vz_1^+-\vz_2+\vz_2^+\|^2 -  \|\vs_1-\vs_1^+-\vs_2+\vs_2^+\|_\vM^2.\nonumber
\end{align}
Next we derive the upper bound of the cross terms in~\eqref{eq:nonexpansive_cp} as follows:
\begin{align}
      & 2\gamma\langle \nabla f(\vx_1)-\nabla f(\vx_2), \vz_1-\vz_1^+-\vz_2+\vz_2^+\rangle -2\gamma\langle \nabla f(\vx_1)-\nabla f(\vx_2), \vx_1-\vx_2\rangle\nonumber\\
      & -2\gamma\langle \vs_1^+-\vs_2^+,\nabla l^*(\vs_1)-\nabla l^*(\vs_2)\rangle \nonumber\\
=			& 2\gamma\langle \nabla f(\vx_1)-\nabla f(\vx_2), \vz_1-\vz_1^+-\vz_2+\vz_2^+\rangle - 2\gamma\langle \nabla f(\vx_1)-\nabla f(\vx_2), \vx_1-\vx_2\rangle \nonumber\\
      & + 2\gamma\langle \vs_1-\vs_1^+-\vs_2+\vs_2^+,\nabla l^*(\vs_1)-\nabla l^*(\vs_2)\rangle -2\gamma\langle \vs_1-\vs_2,\nabla l^*(\vs_1)-\nabla l^*(\vs_2)\rangle \nonumber\\
\leq 	& 2\gamma\langle \nabla f(\vx_1)-\nabla f(\vx_2) , \vz_1-\vz_1^+-\vz_2+\vz_2^+\rangle - 2\gamma\beta \| \nabla f(\vx_1)-\nabla f(\vx_2)\|^2 \nonumber\\			
			& +2\gamma\langle \vs_1-\vs_1^+-\vs_2+\vs_2^+,\nabla l^*(\vs_1)-\nabla l^*(\vs_2)\rangle -2\gamma\beta\|\nabla l^*(\vs_1)-\nabla l^*(\vs_2)\|^2_{\vM^{-1}} \nonumber\\
\leq	& \textstyle \epsilon \|\vz_1-\vz_1^+-\vz_2+\vz_2^+\|^2+\left({\gamma^2\over \epsilon}-2\gamma\beta\right)\| \nabla f(\vx_1)-\nabla f(\vx_2)\|^2 \nonumber\\
      & \textstyle+\epsilon\|\vs_1-\vs_1^+-\vs_2+\vs_2^+\|_\vM^2+\left({\gamma^2\over \epsilon}-2\gamma\beta\right)\|\nabla l^*(\vs_1)-\nabla l^*(\vs_2)\|^2_{\vM^{-1}}. \label{lemma2_a}
\end{align}
The first inequality comes from the cocoerciveness of $\nabla f$ and $\nabla l^*$ in Assumption~\ref{assum:1}, and the second inequality comes from the Cauchy-Schwarz inequality. Therefore, combing~\eqref{eq:nonexpansive_cp} and~\eqref{lemma2_a} gives
\begin{align}
			&	\|(\vz_1^+,\vs_1^+)-(\vz_2^+,\vs_2^+)\|_{\vI,\vM}^2-\|(\vz_1,\vs_1)-(\vz_2,\vs_2)\|_{\vI,\vM}^2\nonumber\\
\leq 	& \textstyle  - (1-\epsilon)\|(\vz_1^+,\vs_1^+)-(\vz_1,\vs_1)-(\vz_2^+,\vs_2^+)+(\vz_2,\vs_2)\|_{\vI,\vM}^2 \label{eq:nonexpansive_a}\\
			& \textstyle +\left({\gamma^2\over \epsilon}-2\gamma\beta\right)(\| \nabla f(\vx_1)-\nabla f(\vx_2)\|^2+\|\nabla l^*(\vs_1)-\nabla l^*(\vs_2)\|_{\vM^{-1}}^2). \nonumber
\end{align}
In addition, letting $\epsilon=\gamma/(2\beta)$, we have that 
\begin{align}
       & \|(\vz_1^+,\vs_1^+)-(\vz_2^+,\vs_2^+)\|_{\vI,\vM}^2 -\|(\vz_1,\vs_1)-(\vz_2,\vs_2)\|_{\vI,\vM}^2 \nonumber\\
\leq 	 & -{2\beta-\gamma\over2\beta}\left(\|(\vz_1^+,\vs_1^+)-(\vz_1,\vs_1)-(\vz_2^+,\vs_2^+)+(\vz_2,\vs_2)\|_{\vI,\vM}^2\right),\label{eq:nonexpansive_b}
\end{align}
Furthermore, when $\vM$ is positive definite, $\PD$ is $\alpha-$averaged with $\alpha ={2\beta\over 4\beta-\gamma}$ under the norm $\|(\cdot,\cdot)\|_{\vI,\vM}$. \qed
\end{proof}


\begin{remark}\label{remark1}
For Davis-Yin (i.e., $\vA=\vI$ and $\gamma\delta=1$), we have $\vM=\vzero$ (therefore, $\nabla l^*$ is a constant) and~\eqref{eq:nonexpansive_a} becomes 
\begin{align*}
\|\vz_1^+-\vz_2^+\|^2-\|\vz_1-\vz_2\|^2  \leq  &   - (1-\epsilon)\|\vz_1^+-\vz_1-\vz_2^++\vz_2\|^2 \\
			                  & \textstyle +\left({\gamma^2\over \epsilon}-2\gamma\beta\right)\| \nabla f(\vx_1)-\nabla f(\vx_2)\|^2,
\end{align*}
which is similar to that of Remark 3.1 in~\cite{davis2015three}. 
In fact, the nonexpansiveness of $\PD$ in Lemma~\ref{lemma:nonexpansive} can also be derived by modifying the result of the three-operator splitting under the new norm defined by $\|(\cdot,\cdot)\|_{\vI,\vM}$ (for positive definite $\vM$). 
The equivalent problem is 
\begin{align*}
\left[\begin{array}{c} \vzero\\\vzero\end{array}\right]\in&\left[\begin{array}{cc}\nabla f&0\\0&\vM^{-1}\nabla l^*\end{array}\right]\left[\begin{array}{c} \vx\\\vs\end{array}\right]
+\left[\begin{array}{cc}\partial g&0\\0&0\end{array}\right]\left[\begin{array}{c} \vx\\\vs\end{array}\right] +\left[\begin{array}{cc}\vI&0\\0&\vM^{-1}\end{array}\right]\left[\begin{array}{cc} 0&\vA^\top\\-\vA&\partial h^*\end{array}\right]\left[\begin{array}{c} \vx\\\vs\end{array}\right].
\end{align*}
In this case, Assumption~\ref{assum:1} provides
\begin{align*}
&\left\langle\left[\begin{array}{cc}\nabla f&0\\0&\vM^{-1}\nabla l^*\end{array}\right]\left[\begin{array}{c} \vz_1\\\vs_1\end{array}\right]-
\left[\begin{array}{cc}\nabla f&0\\0&\vM^{-1}\nabla l^*\end{array}\right]\left[\begin{array}{c} \vz_2\\\vs_2\end{array}\right],\left[\begin{array}{c} \vz_1-\vz_2\\\vs_1-\vs_2\end{array}\right]
\right\rangle_{\vI,\vM}\\
\geq & \beta\left\|\left[\begin{array}{cc}\nabla f&0\\0&\vM^{-1}\nabla l^*\end{array}\right]\left[\begin{array}{c} \vz_1\\\vs_1\end{array}\right]-
\left[\begin{array}{cc}\nabla f&0\\0&\vM^{-1}\nabla l^*\end{array}\right]\left[\begin{array}{c} \vz_2\\\vs_2\end{array}\right]\right\|_{\vI,\vM}^2,
\end{align*}
i.e., $\left[\begin{array}{cc}\nabla f&0\\0&\vM^{-1}\nabla l^*\end{array}\right]$ is a $\beta$-cocoercive operator under the norm $\|(\cdot,\cdot)\|_{\vI,\vM}$. 
In addition, the two operators $\left[\begin{array}{cc}\partial g&0\\0&0\end{array}\right]$ and $\left[\begin{array}{cc}\vI&0\\0&\vM^{-1}\end{array}\right]\left[\begin{array}{cc} 0&\vA^\top\\-\vA&\partial h^*\end{array}\right]$ are maximal monotone under the norm $\|(\cdot,\cdot)\|_{\vI,\vM}$. 
Then we can modify the result of Davis-Yin and show the $\alpha$-averageness of $\PD$. 
However, the primal-dual gap convergence in Section~\ref{sec:primal_dual} and linear convergence rate in Section~\ref{sec:linear} can not be obtained from~\cite{davis2015three} because $\left[\begin{array}{cc}\nabla f&0\\0&0\end{array}\right]$ is not strongly monotone under the norm $\|(\cdot,\cdot)\|_{\vI,\vM}$. 
For the completeness, we provide the proof of the $\alpha$-averageness of $\PD$ here. In fact,~\eqref{eq:nonexpansive_b} in the proof can be stronger than the $\alpha$-averageness result when $\nabla l^*=\vzero$ or $\nabla f=\vzero$. For example, when $\nabla l^*=\vzero$, we have 
\begin{align}
			&	\|(\vz_1^+,\vs_1^+)-(\vz_2^+,\vs_2^+)\|_{\vI,\vM}^2-\|(\vz_1,\vs_1)-(\vz_2,\vs_2)\|_{\vI,\vM}^2\nonumber\\
\leq 	& \textstyle  - {2\beta-\gamma\over2\beta} \|\vz_1-\vz_1^+-\vz_2+\vz_2^+\|^2 -  \|\vs_1-\vs_1^+-\vs_2+\vs_2^+\|_\vM^2.
\end{align}
\end{remark}

\begin{corollary}
\label{cor:CP}
When $f=0$ and $l^*=0$, we have 
\begin{align*}
      & \|\vz_1^+-\vz_2^+\|^2 + \|\vs_1^+-\vs_2^+\|_\vM^2 - \|\vz_1-\vz_2\|^2 -\|\vs_1-\vs_2\|_\vM^2  \\
\leq  & - \|\vz_1^+-\vz_1-\vz_2^++\vz_2\|^2 - \|\vs_1^+-\vs_1-\vs_2^++\vs_2\|_\vM^2,
\end{align*}
for any $\gamma$ and $\delta$ such that $\gamma\delta\|\vA\vA^\top\|\leq1$. 
It means that the Chambolle-Pock iteration is equivalent to a firmly nonexpansive operator under the norm $\|(\cdot,\cdot)\|_{\vI,\vM}$ when $\vM$ is positive definite. 
\end{corollary}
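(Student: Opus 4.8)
The plan is to obtain the corollary as an immediate specialization of Lemma~\ref{lemma:nonexpansive}. When $f=0$ we have $\nabla f\equiv\vzero$, and when $l^*=0$ we have $\nabla l^*\equiv\vzero$, which is a constant; hence Assumption~\ref{assum:1} holds for \emph{every} $\beta>0$, and (by the discussion following Assumption~\ref{assum:1}) the only structural requirement on $\vM={\gamma\over\delta}(\vI-\gamma\delta\vA\vA^\top)$ is that it be positive semidefinite. That is guaranteed exactly by $\gamma\delta\|\vA\vA^\top\|\leq1$, so Lemma~\ref{lemma:nonexpansive} is applicable. Moreover, in this regime $\PD$ is precisely the Chambolle--Pock iteration, as recorded in Section~\ref{sec:compare}.

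First I would return to the intermediate estimate~\eqref{eq:nonexpansive_a} in the proof of Lemma~\ref{lemma:nonexpansive}, which is valid for an arbitrary parameter $\epsilon>0$. Because $\nabla f(\vx_1)-\nabla f(\vx_2)=\vzero$ and $\nabla l^*(\vs_1)-\nabla l^*(\vs_2)=\vzero$, the entire last line of~\eqref{eq:nonexpansive_a} drops out, leaving
\begin{align*}
&\|(\vz_1^+,\vs_1^+)-(\vz_2^+,\vs_2^+)\|_{\vI,\vM}^2-\|(\vz_1,\vs_1)-(\vz_2,\vs_2)\|_{\vI,\vM}^2\\
\leq{}&-(1-\epsilon)\|(\vz_1^+,\vs_1^+)-(\vz_1,\vs_1)-(\vz_2^+,\vs_2^+)+(\vz_2,\vs_2)\|_{\vI,\vM}^2
\end{align*}
for every $\epsilon>0$. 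Letting $\epsilon\downarrow0$ replaces the coefficient $1-\epsilon$ by $1$. (Equivalently, one may send $\beta\to\infty$ directly in~\eqref{eq:average}, since here $f$ and $l^*$ place no finite ceiling on $\beta$.) Then I would expand the product norm through $\|(\vz,\vs)\|_{\vI,\vM}^2=\|\vz\|^2+\|\vs\|_\vM^2$ and rearrange, which reproduces verbatim the displayed inequality of the corollary.

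Finally I would observe that this inequality is exactly the defining inequality of a firmly nonexpansive (equivalently $1/2$-averaged) operator with respect to the norm $\|(\cdot,\cdot)\|_{\vI,\vM}$; when $\gamma\delta\|\vA\vA^\top\|<1$ the matrix $\vM$ is positive definite, so $\|(\cdot,\cdot)\|_{\vI,\vM}$ is a genuine norm on $\cX\times\cS$ and the phrase ``the Chambolle--Pock iteration is firmly nonexpansive under $\|(\cdot,\cdot)\|_{\vI,\vM}$'' is well posed.

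There is essentially no obstacle: the corollary is a one-line consequence of Lemma~\ref{lemma:nonexpansive} once the vanishing of the gradient terms is invoked. The only points deserving a word of care are (i) justifying that $\epsilon$ may be driven to $0$ (or $\beta$ to $\infty$) — which is immediate, as the coefficient in question multiplies terms that are now identically zero — and (ii) separating the boundary case $\gamma\delta\|\vA\vA^\top\|=1$, where $\|\cdot\|_\vM$ is only a seminorm and ``firmly nonexpansive'' must be understood in that weaker sense, from the strict case, where one has a bona fide norm.
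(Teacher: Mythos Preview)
Your proposal is correct and follows essentially the same approach as the paper: specialize the proof of Lemma~\ref{lemma:nonexpansive} to $f=0$, $l^*=0$. The only difference is that you work from the later inequality~\eqref{eq:nonexpansive_a} and then send $\epsilon\downarrow0$ (or $\beta\to\infty$), whereas the paper substitutes one step earlier, directly into~\eqref{eq:nonexpansive_cp}, where the gradient cross terms vanish identically and no limiting argument is needed at all.
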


\begin{proof}
Letting $f=0$ and $l^*=0$ in~\eqref{eq:nonexpansive_cp} immediately gives the result. \qed
\end{proof}

The equivalence of the Chambolle-Pock iteration to a firmly nonexpansive operator under the norm defined by ${1\over \gamma}\|\vx\|^2-2\langle \vA\vx,\vs\rangle +{1\over\delta}\|\vs\|^2$ is shown in~\cite{peng2016coordinate,he2014convergence,7025841} by reformulating Chambolle-Pock as a proximal point algorithm applied on the KKT conditions. 
Here, we show the firmly nonexpansiveness of the Chambolle-Pock iteration for $(\vz,\vs)$ under a different norm defined by $\|(\cdot,\cdot)\|_{\vI,\vM}$. 
In fact, the Chambolle-Pock iteration is equivalent to DRS applied on the KKT conditions based on the connection between PD3O and Davis-Yin in Remark~\ref{remark1} and that Davis-Yin reduces to DRS without the Lipschitz continous operator.
The equivalence of the Chambolle-Pock iteration and DRS is also showed in~\cite{o2017equivalence} using $(\vz,\sqrt{\vM}\vs)$ under the standard norm instead of $(\vz,\vs)$ under the norm $\|(\cdot,\cdot)\|_{\vI,\vM}$, therefore the convergence condition of Chambolle-Pock becomes $\gamma\delta\|\vA\vA^\top\|\leq 1$, if we do not consider the convergence of the dual variable $\vs$. 

\begin{corollary}[$\alpha$-averageness of PAPC]\label{cor:PDFP2O}
When $g=0$ and $l^*=0$, $\PD$ reduces to the PAPC iteration, and it is $\alpha$-averaged with $\alpha={2\beta/(4\beta-\gamma)}$ when $\gamma<2\beta$ and $\gamma\delta\|\vA\vA^\top\|<1$.
\end{corollary}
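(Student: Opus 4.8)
The plan is to obtain this statement as an immediate specialization of Lemma~\ref{lemma:nonexpansive}. First I would observe that setting $g=0$ forces $\vx=\prox_{\gamma g}(\vz)=\vz$ in~\eqref{for:PD3O_iteration_a}, so that the three-line recursion~\eqref{for:PD3O} collapses to~\eqref{for:PDFP2O}, which is precisely PAPC; in particular the state variable is now just $(\vz,\vs)=(\vx,\vs)$, and $\PD$ restricted to this setting coincides with the PAPC operator. Next, setting $l^*=0$ makes $\nabla l^*\equiv\vzero$, so the cocoercivity condition~\eqref{eq:lcoco} holds trivially for every $\beta>0$, and Assumption~\ref{assum:1} is in force as soon as $f$ has a $1/\beta$-Lipschitz gradient and $\vM$ is positive semidefinite.

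The key step is to verify that the hypothesis $\gamma\delta\|\vA\vA^\top\|<1$ upgrades $\vM$ from positive semidefinite to positive definite. Since $\vM={\gamma\over\delta}(\vI-\gamma\delta\vA\vA^\top)$ and $\gamma\delta\|\vA\vA^\top\|<1$, the self-adjoint operator $\vI-\gamma\delta\vA\vA^\top$ is bounded below by $(1-\gamma\delta\|\vA\vA^\top\|)\vI\succ\vzero$, hence $\vM\succ\vzero$ and $\|(\cdot,\cdot)\|_{\vI,\vM}$ is a genuine norm on $\cX\times\cS$. With $\vM$ positive definite and $\gamma<2\beta$, the hypotheses of Lemma~\ref{lemma:nonexpansive} are met, and its conclusion gives directly that $\PD$ — here equal to the PAPC iteration — is $\alpha$-averaged under $\|(\cdot,\cdot)\|_{\vI,\vM}$ with $\alpha={2\beta\over 4\beta-\gamma}$, which lies in $(0,1)$ exactly because $0<\gamma<2\beta$.

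There is essentially no obstacle: the corollary is a direct reading of Lemma~\ref{lemma:nonexpansive} in the case $g=0$, $\nabla l^*=\vzero$, and the only point worth spelling out is why the \emph{strict} inequality $\gamma\delta\|\vA\vA^\top\|<1$ (rather than $\le 1$) is needed, namely to guarantee that $\vM$ is invertible so that $\|(\cdot,\cdot)\|_{\vI,\vM}$ separates points and the notion of $\alpha$-averagedness with respect to it is meaningful. If a sharper bound were wanted one could instead invoke the strengthened estimate noted in Remark~\ref{remark1} for the case $\nabla l^*=\vzero$, but the $\alpha$-averagedness with $\alpha={2\beta/(4\beta-\gamma)}$ is all that the corollary asserts.
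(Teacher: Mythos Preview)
Your proposal is correct and matches the paper's approach: the paper states this corollary without a separate proof, treating it as an immediate specialization of Lemma~\ref{lemma:nonexpansive} once one notes that $g=0$ collapses~\eqref{for:PD3O} to~\eqref{for:PDFP2O}, that $l^*=0$ trivially satisfies~\eqref{eq:lcoco}, and that the strict inequality $\gamma\delta\|\vA\vA^\top\|<1$ makes $\vM$ positive definite so the $\alpha$-averagedness conclusion of the lemma applies verbatim. Your explicit justification of why the strict inequality is needed (to make $\|(\cdot,\cdot)\|_{\vI,\vM}$ a genuine norm) is a useful clarification that the paper leaves implicit.
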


In~\cite{chen2013primal}, the PAPC iteration is shown to be nonexpansive \textnormal{only} under a norm for $\cX\times \cS$ that is defined by $\sqrt{\|\vz\|^2+{\gamma\over \delta}\|\vs\|^2}$. Corollary~\ref{cor:PDFP2O} improves the result by showing that it is $\alpha$-averaged with $\alpha=2\beta/(4\beta-\gamma)$ under the norm $\|(\cdot,\cdot)\|_{\vI,\vM}$. 
This result appeared in~\cite{davis2015convergence} previously.

\begin{lemma}\label{lemma:optimal_solution}For any fixed point $(\vz^*,\vs^*)$ of $\PD$, $\prox_{\gamma g}(\vz^*)$ is an optimal solution to the optimization problem~\eqref{for:main_problem}. For any optimal solution $\vx^*$ of the optimization problem~\eqref{for:main_problem} satisfying $\vzero\in  \partial g(\vx^*)+ \nabla f(\vx^*)+ \vA^\top\partial h\square l(\vA\vx^*)$, we can find a fixed point $(\vz^*,\vs^*)$ of $\PD$ such that $\vx^*=\prox_{\gamma g}(\vz^*)$.
\end{lemma}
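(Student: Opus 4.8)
The plan is to make the fixed-point equations of $\PD$ explicit, reduce them to the first-order optimality system of~\eqref{for:main_problem}, and then run the argument in both directions. First I would let $(\vz^*,\vs^*)$ be a fixed point and put $\vx^*:=\prox_{\gamma g}(\vz^*)$, so that~\eqref{for:PD3O_iteration_a} holds by definition. Evaluating~\eqref{for:PD3O_iteration_c} at the fixed point gives $\vz^*=\vx^*-\gamma\nabla f(\vx^*)-\gamma\vA^\top\vs^*$, and hence $2\vx^*-\vz^*-\gamma\nabla f(\vx^*)=\vx^*+\gamma\vA^\top\vs^*$. Substituting this into~\eqref{for:PD3O_iteration_b}, the two $\gamma\delta\vA\vA^\top\vs^*$ terms cancel and the $\vs$-update collapses to $\vs^*=\prox_{\delta h^*}\big(\vs^*-\delta\nabla l^*(\vs^*)+\delta\vA\vx^*\big)$. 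Thus being a fixed point is equivalent to the two identities $\vz^*-\vx^*=-\gamma\nabla f(\vx^*)-\gamma\vA^\top\vs^*$ and $\vs^*=\prox_{\delta h^*}\big(\vs^*-\delta\nabla l^*(\vs^*)+\delta\vA\vx^*\big)$, together with $\vx^*=\prox_{\gamma g}(\vz^*)$.

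Next I would rewrite these as subdifferential inclusions. From $\vx^*=(\vI+\gamma\partial g)^{-1}(\vz^*)$ we have $\gamma^{-1}(\vz^*-\vx^*)\in\partial g(\vx^*)$, which with the first identity gives $-\nabla f(\vx^*)-\vA^\top\vs^*\in\partial g(\vx^*)$. From $\vs^*=(\vI+\delta\partial h^*)^{-1}\big(\vs^*-\delta\nabla l^*(\vs^*)+\delta\vA\vx^*\big)$ we get $\vA\vx^*-\nabla l^*(\vs^*)\in\partial h^*(\vs^*)$; since $\dom l^*=\cS$ the subdifferential sum rule is valid, and because $(h\square l)^*=h^*+l^*$ this is the same as $\vA\vx^*\in\partial(h\square l)^*(\vs^*)$, equivalently $\vs^*\in\partial(h\square l)(\vA\vx^*)$ by conjugacy. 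Adding the two inclusions yields $\vzero\in\partial g(\vx^*)+\nabla f(\vx^*)+\vA^\top\partial(h\square l)(\vA\vx^*)$, and since $\nabla f(\vx^*)+\partial g(\vx^*)+\vA^\top\partial(h\square l)(\vA\vx^*)\subseteq\partial\big(f+g+(h\square l)\circ\vA\big)(\vx^*)$ always holds for convex functions, $\vx^*$ minimizes~\eqref{for:main_problem}.

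For the converse I would simply reverse these steps. Given an optimal $\vx^*$ with $\vzero\in\partial g(\vx^*)+\nabla f(\vx^*)+\vA^\top\partial(h\square l)(\vA\vx^*)$, choose $\vs^*\in\partial(h\square l)(\vA\vx^*)$ with $-\nabla f(\vx^*)-\vA^\top\vs^*\in\partial g(\vx^*)$, and set $\vz^*:=\vx^*-\gamma\nabla f(\vx^*)-\gamma\vA^\top\vs^*$. Then $\vz^*-\vx^*\in\gamma\partial g(\vx^*)$ gives $\vx^*=\prox_{\gamma g}(\vz^*)$, so~\eqref{for:PD3O_iteration_a} and~\eqref{for:PD3O_iteration_c} hold at $(\vz^*,\vs^*)$, while $\vs^*\in\partial(h\square l)(\vA\vx^*)$ unwinds to $\vA\vx^*-\nabla l^*(\vs^*)\in\partial h^*(\vs^*)$, hence $\vs^*=\prox_{\delta h^*}\big(\vs^*-\delta\nabla l^*(\vs^*)+\delta\vA\vx^*\big)$, which by the cancellation identity above is precisely~\eqref{for:PD3O_iteration_b} at $(\vz^*,\vs^*)$. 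Therefore $(\vz^*,\vs^*)\in\Fix\PD$ and $\vx^*=\prox_{\gamma g}(\vz^*)$.

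I do not expect a deep obstacle; the proof is essentially bookkeeping. The two places that need care are the algebraic cancellation that turns the $\vs$-update at a fixed point into $\vs^*=\prox_{\delta h^*}(\vs^*-\delta\nabla l^*(\vs^*)+\delta\vA\vx^*)$, and the passage between $\partial h^*$-inclusions and $\partial(h\square l)$-inclusions, for which one must invoke $(h\square l)^*=h^*+l^*$, the differentiability of $l^*$ (so that $\nabla l^*=\partial l^*$ and the sum rule for $\partial(h^*+l^*)$ is valid), and the conjugacy equivalence $\vu\in\partial\phi(\vv)\Leftrightarrow\vv\in\partial\phi^*(\vu)$.
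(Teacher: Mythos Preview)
Your proposal is correct and follows essentially the same approach as the paper: write out the fixed-point equations of $\PD$, translate them into the subdifferential inclusions $\gamma^{-1}(\vz^*-\vx^*)\in\partial g(\vx^*)$ and $\vA\vx^*-\nabla l^*(\vs^*)\in\partial h^*(\vs^*)$, combine them into the optimality condition, and then reverse the construction. You are slightly more explicit than the paper in justifying the passage between $\partial h^*+\nabla l^*$ and $\partial(h\square l)$ via $(h\square l)^*=h^*+l^*$ and conjugacy, and in spelling out the cancellation that simplifies~\eqref{for:PD3O_iteration_b} at a fixed point, but these are exactly the details the paper leaves implicit.
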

\begin{proof}
If $(\vz^*,\vs^*)$ is a fixed point of $\PD$, let $\vx^*=\prox_{\gamma g}(\vz^*)$. 
Then we have $\vzero= \vz^*-\vx^*+\gamma\nabla f(\vx^*)+ \gamma\vA^\top\vs^*$ from~\eqref{for:PD3O_iteration_c}, $\vz^*-\vx^*\in\gamma \partial g(\vx^*)$ from~\eqref{for:PD3O_iteration_a}, and $\vA\vx^*\in\partial h^*(\vs^*)+\nabla l^*(\vs^*)$ from~\eqref{for:PD3O_iteration_b} and~\eqref{for:PD3O_iteration_c}. Therefore, $\vzero\in \gamma \partial g(\vx^*)+\gamma \nabla f(\vx^*)+\gamma \vA^\top\partial h\square l(\vA\vx^*)$, i.e., $\vx^*$ is an optimal solution for the convex problem~\eqref{for:main_problem}.

If $\vx^*$ is an optimal solution for problem~\eqref{for:main_problem} such that $\vzero\in  \partial g(\vx^*)+ \nabla f(\vx^*)+ \vA^\top\partial h\square l(\vA\vx^*)$, there exist $\vq_g^*\in\partial g(\vx^*)$ and $\vq_{h}^*\in\partial h\square l(\vA\vx^*)$ such that $\vzero=\vq_g^* +\nabla f(\vx^*) + \vA^\top\vq_{h}^*$. Letting $\vz^*=\vx^*+\gamma \vq_g^*$ and $\vs^*=\vq_{h}^*$, we derive
$\vx=\vx^*$ from~\eqref{for:PD3O_iteration_a}, $\vs^+=\prox_{\delta h^*}(\vs^*-\delta\nabla l^*(\vs^*)+\delta\vA\vx^*)=\vs^*$ from~\eqref{for:PD3O_iteration_b}, and $\vz^+=\vx^*-\gamma\nabla f(\vx^*)-\gamma \vA^\top\vs^*=\vx^*+\gamma \vq_g^*=\vz^*$ from~\eqref{for:PD3O_iteration_c}. Thus $(\vz^*,\vs^*)$ is a fixed point of $\PD$. \qed
\end{proof}

\begin{theorem}[sublinear convergence rate]\label{thm:main}
Choose $\gamma$ and $\delta$ such that $\gamma<2\beta$ and Assumption~\ref{assum:1} is satisfied. Let $(\vz^*,\vs^*)$ be any fixed point of $\PD$, and $(\vz^k,\vs^k)_{k\geq0}$ is the sequence generated by PD3O. 
\begin{enumerate}
\item[1)] The sequence $(\|(\vz^k,\vs^k)-(\vz^*,\vs^*)\|_{\vI,\vM})_{k\geq 0}$ is monotonically nonincreasing.
\item[2)] The sequence $(\|\PD(\vz^k,\vs^k)-(\vz^k,\vs^k)\|_{\vI,\vM})_{k\geq 0}$ is monotonically nonincreasing and converges to 0.
\item[3)] We have the following convergence rate
\begin{align}
 \|\PD(\vz^k,\vs^k)-(\vz^k,\vs^k)\|_{\vI,\vM}^2\leq {2\beta\over 2\beta-\gamma } {\|(\vz^0,\vs^0)-(\vz^*,\vs^*)\|_{\vI,\vM}^2\over k+1} \label{eqn:rate_fp}
\end{align}
and 
\begin{align*}
\|\PD(\vz^k,\vs^k)-(\vz^k,\vs^k)\|_{\vI,\vM}^2=o\left({1\over k+1}\right).
\end{align*}
\item[4)] $(\vz^k,\vs^k)$ weakly converges to a fixed point of $\PD$. 
\end{enumerate}
\end{theorem}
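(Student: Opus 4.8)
The plan is to package the earlier results into the standard theory of averaged (equivalently, nonexpansive) fixed-point iterations and read off all four conclusions. Since Lemma~\ref{lemma:nonexpansive} shows that $\PD$ is $\alpha$-averaged with $\alpha = 2\beta/(4\beta-\gamma) \in (0,1)$ under the norm $\|(\cdot,\cdot)\|_{\vI,\vM}$ (here $\vM$ is positive definite, so this really is a norm on $\cX\times\cS$, and the fixed-point equation is equivalent by Lemma~\ref{lemma:optimal_solution} to a genuine optimum of~\eqref{for:main_problem}), the iteration $(\vz^k,\vs^k) = \PD^k(\vz^0,\vs^0)$ is a Krasnosel'ski\u{\i}--Mann-type iteration with constant relaxation. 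The key quantitative inequality is~\eqref{eq:average}, which for two inputs with $(\vz_2,\vs_2) = (\vz^*,\vs^*)$ a fixed point (so $(\vz_2^+,\vs_2^+) = (\vz^*,\vs^*)$) becomes
\begin{align}
\|(\vz^{k+1},\vs^{k+1})-(\vz^*,\vs^*)\|_{\vI,\vM}^2 \leq \|(\vz^k,\vs^k)-(\vz^*,\vs^*)\|_{\vI,\vM}^2 - \tfrac{2\beta-\gamma}{2\beta}\|\PD(\vz^k,\vs^k)-(\vz^k,\vs^k)\|_{\vI,\vM}^2. \label{eq:fej}
\end{align}

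From~\eqref{eq:fej}, part 1) is immediate since the subtracted term is nonnegative (using $\gamma < 2\beta$). Part 2): monotonicity of $\|\PD(\vz^k,\vs^k)-(\vz^k,\vs^k)\|_{\vI,\vM}$ follows from applying the $\alpha$-averagedness (equivalently nonexpansiveness) of $\PD$ to the pair of inputs $(\vz^{k},\vs^{k})$ and $(\vz^{k-1},\vs^{k-1})$: nonexpansiveness gives $\|\PD(\vz^k,\vs^k)-\PD(\vz^{k-1},\vs^{k-1})\|_{\vI,\vM} \leq \|(\vz^k,\vs^k)-(\vz^{k-1},\vs^{k-1})\|_{\vI,\vM}$, i.e. $\|(\vz^{k+1},\vs^{k+1})-(\vz^k,\vs^k)\|_{\vI,\vM} \leq \|(\vz^k,\vs^k)-(\vz^{k-1},\vs^{k-1})\|_{\vI,\vM}$; convergence to $0$ then follows by telescoping~\eqref{eq:fej}, which gives $\sum_{k\geq0}\|\PD(\vz^k,\vs^k)-(\vz^k,\vs^k)\|_{\vI,\vM}^2 \leq \tfrac{2\beta}{2\beta-\gamma}\|(\vz^0,\vs^0)-(\vz^*,\vs^*)\|_{\vI,\vM}^2 < \infty$, so the (monotone nonincreasing) summand tends to $0$.

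Part 3): the explicit $O(1/k)$ bound~\eqref{eqn:rate_fp} is the standard consequence of a monotone nonincreasing nonnegative sequence $a_k := \|\PD(\vz^k,\vs^k)-(\vz^k,\vs^k)\|_{\vI,\vM}^2$ with $\sum_{k=0}^{K} a_k \leq C$: then $(K+1)a_K \leq \sum_{k=0}^{K}a_k \leq C$, so $a_K \leq C/(K+1)$ with $C = \tfrac{2\beta}{2\beta-\gamma}\|(\vz^0,\vs^0)-(\vz^*,\vs^*)\|_{\vI,\vM}^2$. The little-$o$ refinement comes from the same monotonicity plus summability: writing $b_K = \sum_{k\geq K} a_k \to 0$, we get $\lceil K/2\rceil\, a_K \leq \sum_{k=\lceil K/2\rceil}^{K} a_k \leq b_{\lceil K/2\rceil} \to 0$, hence $K a_K \to 0$. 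Part 4): weak convergence to a fixed point is exactly the Krasnosel'ski\u{\i}--Mann convergence theorem for averaged operators on a Hilbert space (the space $\cX\times\cS$ equipped with the equivalent inner product $\langle\cdot,\cdot\rangle_{\vI,\vM}$ is still a Hilbert space, and $\Fix\PD\neq\emptyset$ by Lemma~\ref{lemma:optimal_solution} together with the assumed existence of $\vx^*$): one combines Fej\'er monotonicity~\eqref{eq:fej} (so the sequence is bounded and has weak cluster points) with the fact, established in part 2), that $(\vz^k,\vs^k)-\PD(\vz^k,\vs^k)\to0$; since $\vI-\PD$ is demiclosed at $0$ (a standard property of nonexpansive operators, e.g.\ \cite[Corollary 4.18]{bauschke2011convex}), every weak cluster point lies in $\Fix\PD$, and Fej\'er monotonicity with respect to \emph{every} fixed point forces uniqueness of the weak cluster point, giving weak convergence of the whole sequence. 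The only mild subtlety — not really an obstacle — is bookkeeping the change of metric: one must note that $\|(\cdot,\cdot)\|_{\vI,\vM}$ is equivalent to the standard norm when $\vM\succ0$, so weak topologies coincide and the Hilbert-space machinery (demiclosedness, weak sequential compactness of bounded sets) applies verbatim.
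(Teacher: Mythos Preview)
Your proposal is correct and follows essentially the same approach as the paper's proof: specialize inequality~\eqref{eq:average} at a fixed point to get Fej\'er monotonicity (part 1), telescope for summability and use nonexpansiveness for monotonicity of the residuals (part 2), combine monotonicity with summability for the $O(1/(k+1))$ and $o(1/(k+1))$ rates (part 3; the paper cites \cite{davis2014convergence} for the little-$o$, whereas you spell out the standard tail-sum argument), and invoke Krasnosel'ski\u{\i}--Mann/demiclosedness for weak convergence (part 4). Your added remarks on the equivalence of norms when $\vM\succ0$ and on demiclosedness are sound and make explicit what the paper leaves to the cited reference.
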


\begin{proof}
1)  Let $(\vz_1,\vs_1)=(\vz^k,\vs^k)$ and $(\vz_2,\vs_2)=(\vz^*,\vs^*)$ in~\eqref{eq:average}, and we have 
\begin{align}
			& \|(\vz^{k+1},\vs^{k+1})-(\vz^*,\vs^*)\|_{\vI,\vM}^2 -\|(\vz^k,\vs^k)-(\vz^*,\vs^*)\|_{\vI,\vM}^2 \nonumber\\
\leq  & -{2\beta-\gamma\over2\beta}\|\PD(\vz^{k},\vs^{k})-(\vz^k,\vs^k)\|_{\vI,\vM}^2.\label{eq:average2}
\end{align}
Thus,  $\|(\vz^k,\vs^k)-(\vz^*,\vs^*)\|_{\vI,\vM}^2$ is monotonically decreasing as long as $\PD(\vz^{k},\vs^{k})-(\vz^k,\vs^k)\neq \vzero$ because $\gamma<2\beta$. 

2) Summing up~\eqref{eq:average2} from 0 to $\infty$, we have 
$$ \sum_{k=0}^\infty\|\PD(\vz^k,\vs^k)-(\vz^k,\vs^k)\|_{\vI,\vM}^2\leq {2\beta\over 2\beta-\gamma}\|(\vz^0,\vs^0)-(\vz^*,\vs^*)\|_{\vI,\vM}^2$$
and thus the sequence $(\|\PD(\vz^k,\vs^k)-(\vz^k,\vs^k)\|_{\vI,\vM})_{k\geq 0}$ converges to 0. 
Furthermore, the sequence $(\|\PD(\vz^k,\vs^k)-(\vz^k,\vs^k)\|_{\vI,\vM})_{k\geq 0}$ is monotonically nonincreasing because 
\begin{align*}
    &\|\PD(\vz^{k+1},\vs^{k+1})-(\vz^{k+1},\vs^{k+1})\|_{\vI,\vM}= \|\PD(\vz^{k+1},\vs^{k+1})-\PD(\vz^{k},\vs^{k})\|_{\vI,\vM} \\
  \leq & \|(\vz^{k+1},\vs^{k+1})-(\vz^{k},\vs^{k})\|_{\vI,\vM}=\|\PD(\vz^{k},\vs^{k})-(\vz^{k},\vs^{k})\|_{\vI,\vM}.
\end{align*}

3) The $o(1/k)$ convergence rate follows from~\cite[Theorem 1]{davis2014convergence}. 
\begin{align*}
\|\PD(\vz^k,\vs^k)-(\vz^k,\vs^k)\|_{\vI,\vM}^2 \leq &  {1\over k+1}\sum_{i=0}^k\|\PD(\vz^i,\vs^i)-(\vz^i,\vs^i)\|_{\vI,\vM}^2  \\
\leq &  {2\beta\over 2\beta-\gamma } {\|(\vz^0,\vs^0)-(\vz^*,\vs^*)\|_{\vI,\vM}^2\over k+1}.
\end{align*}

4) This follows from the Krasnosel'skii-Mann theorem~\cite[Theorem 5.14]{bauschke2011convex}.\qed
\end{proof}

\begin{remark}
The convergence of $\vz^k$ (or $\vx^k$) can be obtained under a weaker condition that $\vM$ is positive semidefinite, instead of positive definite if $\nabla l^*$ is a constant. 
In this case, the weak convergence of $(\vz^k,\sqrt{\vM}\vs^k)$ can be obtained under the standard norm. 
Furthermore, if we assume that $\cX$ is finite dimensional, we have the strong convergence of $\vz^k$ and thus $\vx^k$ because the proxmial operator in~\eqref{for:PD3O_iteration_a} is nonexpansive.
\end{remark}

\begin{remark}
Since the operator $\PD$ is $\alpha$-averaged with $\alpha={2\beta/(4\beta-\gamma)}$, 
the relaxed iteration $(\vz^{k+1},\vs^{k+1})=\theta_k\PD(\vz^k,\vs^k) + (1-\theta_k)(\vz^k,\vs^k)$ still converges if $\sum_{k=0}^\infty \theta_k (2-\gamma/(2\beta)-\theta_k) =\infty$.
\end{remark}

\subsection{$O(1/k)$-ergodic convergence for the general convex case}\label{sec:primal_dual}
In this subsection, let 
\begin{align}
\cL(\vx,\vs)=f(\vx)+g(\vx)+\langle\vA\vx,\vs\rangle- h^*(\vs)-l^*(\vs).
\end{align}
If $\cL$ has a saddle point, there exists $(\vx^*,\vs^*)$ such that 
$$\cL(\vx^*,\vs)\leq \cL(\vx^*,\vs^{*})\leq \cL(\vx,\vs^{*}),~\forall (\vx,\vs)\in\cX\times\cS.$$

Then, we consider the quantity $\cL(\vx^k,\vs)-\cL(\vx,\vs^{k+1})$ for all $(\vx,\vs)\in\cX\times \cS$. 
If $\cL(\vx^k,\vs)-\cL(\vx,\vs^{k+1})\leq 0$ for all $(\vx,\vs)$, then we have  
$$\cL(\vx^k,\vs)\leq \cL(\vx^k,\vs^{k+1})\leq \cL(\vx,\vs^{k+1}),$$
which means that $(\vx^k,\vs^{k+1})$ is a saddle point of $\cL$.

\begin{theorem}
Let $\gamma\leq \beta$ and Assumption~\ref{assum:1} be satisfied. The sequence $(\vx^k,\vz^k,\vs^k)_{k\geq 0}$ is generated by PD3O. Define
\begin{align*}
 \overline\vx^k = {1\over k+1}\sum_{i=0}^k\vx^i,\qquad \overline\vs^{k+1} = {1\over k+1}\sum_{i=0}^k\vs^{i+1}.
\end{align*}
Then we have 
\begin{align}
      \cL(\bar\vx^k,\vs)-\cL(\vx,\bar\vs^{k+1}) 
\leq & {1\over 2(k+1)\gamma}\|(\vz,\vs)-(\vz^0,\vs^0)\|_{\vI,\vM}^2.
\end{align}
for any $(\vx,\vs)\in\cX\times\cS$ and $\vz=\vx-\gamma\nabla f(\vx)-\gamma\vA^\top\vs$.
\end{theorem}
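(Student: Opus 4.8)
The plan is to establish a per-iteration primal-dual gap inequality and then average. The engine is the fundamental equality, Lemma~\ref{lemma:fundamental}, applied with the running iterate as the first input and a \emph{phantom} point attached to the arbitrary pair $(\vx,\vs)$ as the second. Concretely, take $(\vz_1,\vs_1)=(\vz^k,\vs^k)$ so that $(\vz_1^+,\vs_1^+)=(\vz^{k+1},\vs^{k+1})$ and $\vx_1=\vx^k$, and take $\vx_2=\vx$, $\vs_2=\vs_2^+=\vs$, $\vz_2=\vz_2^+=\vz$ with $\vz=\vx-\gamma\nabla f(\vx)-\gamma\vA^\top\vs$. The key observation is that the proof of Lemma~\ref{lemma:fundamental} uses \emph{only} the identity~\eqref{for:PD3O_iteration_c}, $\vz_i^+=\vx_i-\gamma\nabla f(\vx_i)-\gamma\vA^\top\vs_i^+$, together with the displayed definitions of $\vq_{h^*}$ and $\vq_g$; it never requires the second input to be a genuine PD3O step. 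Our phantom point satisfies~\eqref{for:PD3O_iteration_c} by the very definition of $\vz$, so~\eqref{eqn:fundamental_1} is available, and a short computation using $\vM={\gamma\over\delta}(\vI-\gamma\delta\vA\vA^\top)$ shows the associated formal quantities collapse to $\vq_{h^*}(\vs_2^+)=\vA\vx-\nabla l^*(\vs)$ and $\vq_g(\vx_2)=-\nabla f(\vx)-\vA^\top\vs$.

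Next I would bound $2\gamma\big(\cL(\vx^k,\vs)-\cL(\vx,\vs^{k+1})\big)$. Split the gap into the $(f,g)$-part, the $(h^*,l^*)$-part, and the bilinear coupling $\langle\vA\vx^k,\vs\rangle-\langle\vA\vx,\vs^{k+1}\rangle$. For the genuinely convex terms evaluated at true iterates use the subgradient inequalities $g(\vx^k)-g(\vx)\le\langle\vq_g(\vx^k),\vx^k-\vx\rangle$ and $h^*(\vs^{k+1})-h^*(\vs)\le\langle\vq_{h^*}(\vs^{k+1}),\vs^{k+1}-\vs\rangle$; for $f$ use~\eqref{lemma1a}; and for $l^*$ split $l^*(\vs^{k+1})-l^*(\vs)$ as $\big(l^*(\vs^{k+1})-l^*(\vs^k)\big)+\big(l^*(\vs^k)-l^*(\vs)\big)$, bounding the first piece by~\eqref{lemma1b} and the second by plain convexity at $\vs^k$. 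Substituting the phantom quantities and recognizing the left-hand side of~\eqref{eqn:fundamental_1}, one checks that every bilinear $\vA$-term and every $\nabla l^*(\vs^k)$-term cancels, leaving
\[2\gamma\big(\cL(\vx^k,\vs)-\cL(\vx,\vs^{k+1})\big)\le\|(\vz^k,\vs^k)-(\vz,\vs)\|_{\vI,\vM}^2-\|(\vz^{k+1},\vs^{k+1})-(\vz,\vs)\|_{\vI,\vM}^2+R_k,\]
where $R_k=-\|\vz^k-\vz^{k+1}\|^2-\|\vs^k-\vs^{k+1}\|_\vM^2+2\gamma\langle\nabla f(\vx^k)-\nabla f(\vx),\vz^k-\vz^{k+1}\rangle-\gamma\beta\|\nabla f(\vx^k)-\nabla f(\vx)\|^2+\gamma\beta^{-1}\|\vs^{k+1}-\vs^k\|_\vM^2$ (the last two summands coming from~\eqref{lemma1a} and~\eqref{lemma1b} respectively).

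Then I would show $R_k\le0$, which is exactly where the hypothesis $\gamma\le\beta$ (stronger than the $\gamma<2\beta$ of Theorem~\ref{thm:main}) enters: Young's inequality gives $2\gamma\langle\nabla f(\vx^k)-\nabla f(\vx),\vz^k-\vz^{k+1}\rangle\le\|\vz^k-\vz^{k+1}\|^2+\gamma^2\|\nabla f(\vx^k)-\nabla f(\vx)\|^2$, so the $\nabla f$-contribution to $R_k$ is at most $(\gamma^2-\gamma\beta)\|\nabla f(\vx^k)-\nabla f(\vx)\|^2\le0$, while the residual dual terms give $(\gamma\beta^{-1}-1)\|\vs^{k+1}-\vs^k\|_\vM^2\le0$. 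Hence the per-iteration estimate reduces to the clean telescoping inequality $2\gamma\big(\cL(\vx^k,\vs)-\cL(\vx,\vs^{k+1})\big)\le\|(\vz^k,\vs^k)-(\vz,\vs)\|_{\vI,\vM}^2-\|(\vz^{k+1},\vs^{k+1})-(\vz,\vs)\|_{\vI,\vM}^2$. Summing over $i=0,\dots,k$, dropping the final nonnegative term, dividing by $k+1$, and using Jensen's inequality — $\cL(\cdot,\vs)$ is convex, so $\cL(\bar\vx^k,\vs)\le{1\over k+1}\sum_{i=0}^k\cL(\vx^i,\vs)$, and $\cL(\vx,\cdot)$ is concave, so $\cL(\vx,\bar\vs^{k+1})\ge{1\over k+1}\sum_{i=0}^k\cL(\vx,\vs^{i+1})$ — yields the stated bound.

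The main obstacle is the bookkeeping in the second step: one must verify that the formal ``phantom subgradients'' are precisely the quantities that make the left-hand side of~\eqref{eqn:fundamental_1} reproduce the subgradient/descent bounds, so that all the $\vA$-coupling and $\nabla l^*(\vs^k)$ terms cancel exactly rather than leaving stray residuals. A related subtlety is that Lemma~\ref{lemma:fundamental} is here invoked at a point that is not a true PD3O iterate, so one should note explicitly that its proof only uses~\eqref{for:PD3O_iteration_c}; and it is worth tracking that the dual descent estimate~\eqref{lemma1b} naturally produces the term $\gamma\beta^{-1}\|\vs^{k+1}-\vs^k\|_\vM^2$, which is exactly why $\gamma\le\beta$, not merely $\gamma<2\beta$, is the relevant condition.
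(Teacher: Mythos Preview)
Your proof is correct and arrives at exactly the same per-iteration inequality~\eqref{eqn:gap_conv_a} as the paper, followed by the same telescoping and Jensen argument. The organizational route, however, differs. The paper never invokes Lemma~\ref{lemma:fundamental} here: it splits the gap as $\big(\cL(\vx^k,\vs)-\cL(\vx^k,\vs^{k+1})\big)+\big(\cL(\vx^k,\vs^{k+1})-\cL(\vx,\vs^{k+1})\big)$ and bounds each piece directly from the optimality conditions of the proximal steps and~\eqref{lemma1a}--\eqref{lemma1b}, then substitutes $\vz=\vx-\gamma\nabla f(\vx)-\gamma\vA^\top\vs$ and completes the square to reach the telescoping form. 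You instead feed a \emph{phantom} second input into Lemma~\ref{lemma:fundamental}, correctly observing that its proof is purely algebraic (it uses only~\eqref{for:PD3O_iteration_c} and the displayed definitions of $\vq_{h^*},\vq_g$, never their subgradient membership), so the identity~\eqref{eqn:fundamental_1} holds with the formal quantities $\vq_{h^*}(\vs_2^+)=\vA\vx-\nabla l^*(\vs)$ and $\vq_g(\vx_2)=-\nabla f(\vx)-\vA^\top\vs$. This lets you reuse the lemma's bookkeeping rather than redoing it, at the price of having to justify the phantom-point application. Your Young-type bound on $R_k$ is a slightly different split from the paper's completion of the square, but both yield $R_k\le0$ under $\gamma\le\beta$. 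Either way works; your route makes the role of Lemma~\ref{lemma:fundamental} as the common engine of the paper more visible, while the paper's direct computation is more self-contained.
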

\begin{proof}
First, we provide an upper bound for $\cL(\vx^k,\vs)-\cL(\vx,\vs^{k+1})$, and to do this, we consider upper bounds for $\cL(\vx^k,\vs)-\cL(\vx^k,\vs^{k+1})$ and $\cL(\vx^k,\vs^{k+1})-\cL(\vx,\vs^{k+1})$.

For any $\vx\in\cX$, we have 
\begin{align}
		 & \cL(\vx^k,\vs^{k+1})-\cL(\vx,\vs^{k+1}) \nonumber\\
= 	 & f(\vx^k)+g(\vx^k)+\langle \vA\vx^k,\vs^{k+1}\rangle-f(\vx)-g(\vx)-\langle\vA\vx,\vs^{k+1}\rangle \nonumber\\
\leq & \langle \nabla f(\vx^k),\vx^k-\vx\rangle -{(\beta/2)}\|\nabla f(\vx^k)-\nabla f(\vx)\|^2 \nonumber\\
     & + \gamma^{-1}\langle \vx-\vx^k,\vx^k-\vz^k\rangle +\langle \vx^k-\vx,\vA^\top\vs^{k+1}\rangle \nonumber\\
=    & \gamma^{-1}\langle \vz^{k+1}-\vz^{k},\vx-\vx^k\rangle-{(\beta/2)}\|\nabla f(\vx^k)-\nabla f(\vx)\|^2. \label{ergodic_a}
\end{align}
The inequality comes from~\eqref{lemma1a} (with $\vx_1$ and $\vx_2$ being $\vx$ and $\vx^k$, respectively) and~\eqref{for:PD3O_iteration_a}. 
The last equality holds because of~\eqref{for:PD3O_iteration_c}. 

On the other hand, combing~\eqref{for:PD3O_iteration_b} and~\eqref{for:PD3O_iteration_c}, we obtain 
\begin{align*}
\vs^{k+1} &=  \prox_{{\delta} h^*} ((\vI-\gamma\delta \vA\vA^\top)\vs^k-{\delta}\nabla l^*(\vs^k) + {\delta}\vA\left(2\vx^k-\vz^k-{\gamma}\nabla f(\vx^k)\right)) \\
&=        \prox_{{\delta} h^*} (\vs^k+\gamma\delta \vA\vA^\top(\vs^{k+1}-\vs^k)-{\delta}\nabla l^*(\vs^k) + {\delta}\vA\left(\vx^k+\vz^{k+1}-\vz^k\right)).
\end{align*}
Therefore, for any $\vs\in\cS$, the following inequality holds.
\begin{align}
     & h^*(\vs^{k+1})-h^*(\vs) \nonumber\\
\leq & \langle \gamma^{-1}\vM(\vs^{k+1}-\vs^k)+\nabla l^*(\vs^k) - \vA\left(\vx^k+\vz^{k+1}-\vz^k\right),\vs-\vs^{k+1}	\rangle \nonumber\\
=    & \gamma^{-1}\langle \vs^{k+1}-\vs^k,\vs-\vs^{k+1}	\rangle_\vM +\langle \nabla l^*(\vs^k),\vs-\vs^{k+1}\rangle \nonumber\\
     &  -\langle \vz^{k+1}-\vz^k,\vA^{\top}(\vs-\vs^{k+1})	\rangle  -\langle \vA\vx^k,\vs-\vs^{k+1}	\rangle.  \label{inequal_h}
\end{align}
Additionally, from~\eqref{lemma1b}, we have 
\begin{align}
l^*(\vs^{k+1}) \leq l^*(\vs^k) + \langle \nabla l^*(\vs^k),\vs^{k+1}-\vs^k\rangle + (2\beta)^{-1}\|\vs^{k+1}-\vs^k\|_\vM^2,~\label{dual_l1}
\end{align}
and the convexity of $l^*$ gives
\begin{align}
l^*(\vs) \geq l^*(\vs^k) + \langle \nabla l^*(\vs^k),\vs-\vs^k\rangle. ~\label{dual_l2}
\end{align}
Combining~\eqref{inequal_h},~\eqref{dual_l1}, and~\eqref{dual_l2}, we derive 
\begin{align}
		 & \cL(\vx^k,\vs)-\cL(\vx^k,\vs^{k+1}) \nonumber\\
=    & \langle \vA\vx^k,\vs-\vs^{k+1}\rangle +h^*(\vs^{k+1})+l^*(\vs^{k+1})-h^*(\vs)-l^*(\vs)\nonumber\\
\leq & \gamma^{-1}\langle \vs-\vs^{k+1},\vs^{k+1}-\vs^k\rangle_\vM-\langle \vz^{k+1}-\vz^k,\vA^\top(\vs-\vs^{k+1})\rangle \nonumber\\
     & +(2\beta)^{-1}\|\vs^{k+1}-\vs^k\|_\vM^2.  \label{ergodic_b} 
\end{align}

Then, adding~\eqref{ergodic_a} and~\eqref{ergodic_b} together gives 
\begin{align*}
     & \cL(\vx^k,\vs)-\cL(\vx,\vs^{k+1}) \\
\leq & \gamma^{-1}\langle \vs-\vs^{k+1},\vs^{k+1}-\vs^k\rangle_\vM-\langle \vz^{k+1}-\vz^k,\vA^\top(\vs-\vs^{k+1})\rangle \\
     &  +\gamma^{-1}\langle \vz^{k+1}-\vz^{k},\vx-\vx^k\rangle-{(\beta/2)}\|\nabla f(\vx^k)-\nabla f(\vx)\|^2 \\
		 & +(2\beta)^{-1}\|\vs^{k+1}-\vs^k\|_\vM^2\\
=    & \gamma^{-1}\langle \vs-\vs^{k+1},\vs^{k+1}-\vs^k\rangle_\vM +\gamma^{-1}\langle \vz^{k+1}-\vz^{k},\vz-\vz^{k+1}\rangle\\
     & +\langle \vz^{k+1}-\vz^{k},\nabla f(\vx)-\nabla f(\vx^k)\rangle-{(\beta/2)}\|\nabla f(\vx^k)-\nabla f(\vx)\|^2\\
		 & +(2\beta)^{-1}\|\vs^{k+1}-\vs^k\|_\vM^2\\
\leq & (2\gamma)^{-1}(\|\vs-\vs^k\|_\vM^2-\|\vs-\vs^{k+1}\|_\vM^2-\|\vs^k-\vs^{k+1}\|_\vM^2) \\
     & +(2\gamma)^{-1}(\|\vz-\vz^{k}\|^2-\|\vz-\vz^{k+1}\|^2-\|\vz^k-\vz^{k+1}\|^2) \\  
		 & +(2\beta)^{-1}\|\vz^k-\vz^{k+1}\|^2+(2\beta)^{-1}\|\vs^{k+1}-\vs^k\|_\vM^2.
\end{align*}
That is 
\begin{align}
\cL(\vx^k,\vs)-\cL(\vx,\vs^{k+1}) \leq & (2\gamma)^{-1}\|(\vz^k,\vs^k)-(\vz,\vs)\|_{\vI,\vM}^2\nonumber\\
 & -(2\gamma)^{-1}\|(\vz^{k+1},\vs^{k+1})-(\vz,\vs)\|_{\vI,\vM}^2 \label{eqn:gap_conv_a}\\
 &  -((2\gamma)^{-1}-(2\beta)^{-1})\|(\vz^k,\vs^k)-(\vz^{k+1},\vs^{k+1})\|_{\vI,\vM}^2. \nonumber
\end{align}
When $\gamma\leq \beta$, we have 
\begin{align}
      \cL(\vx^k,\vs)-\cL(\vx,\vs^{k+1}) 
\leq & (2\gamma)^{-1}\|(\vz,\vs)-(\vz^{k},\vs^k)\|_{\vI,\vM}^2 \nonumber\\
     & -(2\gamma)^{-1}\|(\vz,\vs)-(\vz^{k+1},\vs^{k+1})\|_{\vI,\vM}^2. \label{conv:gap_one_step}
\end{align}
Using the definition of $(\bar\vx^k,\bar\vs^{k+1})$ and the Jensen inequality, it follows that 
\begin{align}
      \cL(\bar\vx^k,\vs)-\cL(\vx,\bar\vs^{k+1})  
\leq & {1\over{k+1}}\sum_{i=0}^k \cL(\vx^i,\vs)-\cL(\vx,\vs^{i+1}) \nonumber\\
\leq & {1\over 2(k+1)\gamma}\|(\vz,\vs)-(\vz^0,\vs^0)\|_{\vI,\vM}^2.
\end{align}
This proves the desired result. \qed
\end{proof}

The $O(1/k)$-ergodic convergence rate proved in~\cite{Drori2015209} for PAPC is on a different sequence $\left({1\over k+1}\sum_{i=1}^{k+1}\vx^{i},{1\over k+1}\sum_{i=1}^{k+1}\vs^{i}=\bar\vs^{k+1}\right)$.

\subsection{Linear convergence rate for special cases}\label{sec:linear}
In this subsection, we provide some results on the linear convergence rate of PD3O with additional assumptions. 
For simplicity, let $(\vz^*,\vs^*)$ be a fixed point of $\PD$ and $\vx^*=\prox_{\gamma g}(\vz^*)$. 
In addition, we let $\langle \vs-\vs^*,\vq_{h^*}(\vs)-\vq_{h^*}^*\rangle\geq \tau_{h^*}\|\vs-\vs^*\|^2_\vM$ and $\langle \vs-\vs^*,\nabla l^*(\vs)-\nabla l^*(\vs^*)\rangle\geq \tau_{l^*}\|\vs-\vs^*\|^2_\vM$ for any $\vs\in\cS$, $\vq_{h^*}(\vs)\in\partial h^*(\vs)$, and $\vq_{h^*}^*\in\partial h^*(\vs^*)$. 
Then $\vM^{-1}\partial h^*$ and $\vM^{-1}\nabla l^*$ are restricted $\tau_{h^*}$-strongly monotone and restricted $\tau_{l^*}$-strongly monotone under the norm defined by $\|(\cdot,\cdot)\|_{\vI,\vM}$, respectively. 
Here we allow that $\tau_{h^*}=0$ and $\tau_{l^*}=0$ for merely monotone operators.
Similarly, we let $\langle \vx-\vx^*,\vq_g(\vx)-\vq_g^*\rangle\geq \tau_g\|\vx-\vx^*\|^2$ and $\langle  \vx-\vx^*,\nabla f(\vx)-\nabla f(\vx^*)\rangle \geq \tau_f\|\vx-\vx^*\|^2$ for any $\vx\in\cX$, $\vq_g(\vx)\in\partial g(\vx)$, and $\vq_g^*\in\partial g(\vx^*)$.

\begin{theorem}[linear convergence rate]
If $g$ has a $L_g$-Lipschitz continuous gradient, i.e., 
$$\|\nabla g(\vx)-\nabla g(\vy)\|\leq L_g\|\vx-\vy\|$$ and $(\vz^+,\vs^+)=\PD(\vz,\vs)$, then under Assumption~\ref{assum:1}, we have
\begin{align*}
	    \textstyle \|\vz^+-\vz^*\|^2+\left(1+2\gamma\tau_{h^*}\right)\|\vs^+-\vs^*\|_\vM^2 \leq \rho\left(\|\vz-\vz^*\|^2+\left(1+2\gamma\tau_{h^*}\right)\|\vs-\vs^*\|_\vM^2 \right)
\end{align*}
where 
\begin{align}\label{eqn:rho}
\textstyle \rho = \max\left( {{1-\left(2\gamma-{\gamma^2\over\beta}\right)\tau_{l^*}} \over 1 +2\gamma\tau_{h^*}},  1- {\left(\left(2\gamma-{\gamma^2\over\beta}\right)\tau_f+2\gamma\tau_g\right)\over 1+\gamma L_g}\right).
\end{align}
When, in addition, $\gamma<2\beta$, $\tau_{h^*}+\tau_{l^*}>0$, and $\tau_f+\tau_g>0$, we have that $\rho<1$ and the algorithm PD3O converges linearly.
\end{theorem}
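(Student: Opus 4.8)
The plan is to imitate the fundamental-equality / nonexpansiveness argument of Lemmas~\ref{lemma:fundamental} and~\ref{lemma:nonexpansive}, but this time comparing the iterate $(\vz,\vs)$ with the fixed point $(\vz^*,\vs^*)$ directly, and \emph{keeping} the strong-monotonicity slack terms rather than discarding them. Concretely, I would apply Lemma~\ref{lemma:fundamental} with $(\vz_1,\vs_1)=(\vz,\vs)$ and $(\vz_2,\vs_2)=(\vz^*,\vs^*)$, noting that at a fixed point $\vz^*=\vz^{*+}$, $\vs^*=\vs^{*+}$, $\vx^*=\prox_{\gamma g}(\vz^*)$, so the difference terms $\vz_2-\vz_2^+$ and $\vs_2-\vs_2^+$ vanish. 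This collapses~\eqref{eqn:fundamental_1} to
\begin{align*}
      & 2\gamma\langle \vs^+-\vs^*,\vq_{h^*}(\vs^+)-\vq_{h^*}^*+\nabla l^*(\vs)-\nabla l^*(\vs^*)\rangle +2\gamma\langle \vx-\vx^*,\vq_g(\vx)-\vq_g^*+\nabla f(\vx)-\nabla f(\vx^*)\rangle\\
 = & \|(\vz,\vs)-(\vz^*,\vs^*)\|_{\vI,\vM}^2-\|(\vz^+,\vs^+)-(\vz^*,\vs^*)\|_{\vI,\vM}^2 -\|\vz-\vz^+\|^2 -\|\vs-\vs^+\|_\vM^2 +2\gamma\langle \nabla f(\vx)-\nabla f(\vx^*), \vz-\vz^+\rangle.
\end{align*}

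Next I would bound the cross term $2\gamma\langle\nabla f(\vx)-\nabla f(\vx^*),\vz-\vz^+\rangle$ and the $\nabla l^*$-cross term exactly as in the proof of Lemma~\ref{lemma:nonexpansive} (Young's inequality with parameter $\epsilon=\gamma/(2\beta)$, together with the cocoercivity inequalities~\eqref{eq:fcoco},~\eqref{eq:lcoco}), which will produce a coefficient $\bigl(\gamma^2/\epsilon-2\gamma\beta\bigr)=0$ killing the gradient-norm terms and leaving a residual $-(1-\epsilon)\|\vz-\vz^+\|^2-(1-\epsilon)\|\vs-\vs^+\|_\vM^2$ on the right. On the left, I would lower-bound each inner product using the restricted strong-monotonicity constants: $\langle\vs^+-\vs^*,\vq_{h^*}(\vs^+)-\vq_{h^*}^*\rangle\geq\tau_{h^*}\|\vs^+-\vs^*\|_\vM^2$, $\langle\vs-\vs^*,\nabla l^*(\vs)-\nabla l^*(\vs^*)\rangle\geq\tau_{l^*}\|\vs-\vs^*\|_\vM^2$ (after rewriting $\nabla l^*(\vs)$ against $\vs^+-\vs^*$ via an added/subtracted $\vs-\vs^*$ term), $\langle\vx-\vx^*,\vq_g(\vx)-\vq_g^*\rangle\geq\tau_g\|\vx-\vx^*\|^2$, and $\langle\vx-\vx^*,\nabla f(\vx)-\nabla f(\vx^*)\rangle\geq\tau_f\|\vx-\vx^*\|^2$, again moving part of the $\nabla f$ term to recover the factor $(2\gamma-\gamma^2/\beta)$. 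Rearranging then yields something of the form
\begin{align*}
\|\vz^+-\vz^*\|^2+(1+2\gamma\tau_{h^*})\|\vs^+-\vs^*\|_\vM^2 \leq \|\vz-\vz^*\|^2+\|\vs-\vs^*\|_\vM^2 - \bigl(2\gamma-\tfrac{\gamma^2}{\beta}\bigr)\tau_{l^*}\|\vs-\vs^*\|_\vM^2 - \bigl(\bigl(2\gamma-\tfrac{\gamma^2}{\beta}\bigr)\tau_f+2\gamma\tau_g\bigr)\|\vx-\vx^*\|^2,
\end{align*}
plus nonpositive leftover quadratic terms that I would simply drop.

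The last step is to convert the $\|\vx-\vx^*\|^2$ coefficient into a $\|\vz-\vz^*\|^2$ coefficient: since $g$ has an $L_g$-Lipschitz gradient, $\prox_{\gamma g}$ is $(1+\gamma L_g)$-cocoercive in the appropriate sense, giving $\|\vx-\vx^*\|^2\geq\|\vz-\vz^*\|^2/(1+\gamma L_g)$ (this uses $\vz-\vx=\gamma\nabla g(\vx)$, $\vz^*-\vx^*=\gamma\nabla g(\vx^*)$ and Lipschitzness to bound $\|\vz-\vz^*\|\le(1+\gamma L_g)\|\vx-\vx^*\|$). Substituting and comparing the $\|\vz-\vz^*\|^2$ and $(1+2\gamma\tau_{h^*})\|\vs-\vs^*\|_\vM^2$ coefficients separately against the RHS pieces gives exactly the contraction factor $\rho$ in~\eqref{eqn:rho} as the maximum of the two ratios. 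Finally, under $\gamma<2\beta$ (so $2\gamma-\gamma^2/\beta>0$), $\tau_{h^*}+\tau_{l^*}>0$ and $\tau_f+\tau_g>0$, each of the two ratios is strictly below $1$: the $\vs$-ratio because either $\tau_{l^*}>0$ makes the numerator $<1+2\gamma\tau_{h^*}$ directly, or $\tau_{l^*}=0$ forces $\tau_{h^*}>0$ so the denominator exceeds the numerator; the $\vz$-ratio because $(2\gamma-\gamma^2/\beta)\tau_f+2\gamma\tau_g>0$. I expect the main obstacle to be bookkeeping: correctly splitting the $\nabla f$ inner product so that the cocoercivity absorbs the cross term while still leaving the strong-convexity contribution $(2\gamma-\gamma^2/\beta)\tau_f$, and making sure the discarded quadratic residuals (the $\|\vz-\vz^+\|^2$, $\|\vs-\vs^+\|_\vM^2$ and leftover $\nabla l^*$ terms) genuinely have nonpositive coefficients under $\gamma\le 2\beta$ rather than only under a stricter bound.
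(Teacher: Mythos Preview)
Your overall strategy is exactly the paper's: specialize the fundamental equality~\eqref{eqn:fundamental_1} to $(\vz_2,\vs_2)=(\vz^*,\vs^*)$, use cocoercivity plus restricted strong monotonicity to extract the $(2\gamma-\gamma^2/\beta)\tau_f$, $2\gamma\tau_g$, $(2\gamma-\gamma^2/\beta)\tau_{l^*}$, $2\gamma\tau_{h^*}$ gains, and finally convert $\|\vx-\vx^*\|^2$ to $\|\vz-\vz^*\|^2/(1+\gamma L_g)$ via $\|\vz-\vz^*\|\le(1+\gamma L_g)\|\vx-\vx^*\|$. The $\|\vx-\vx^*\|$--$\|\vz-\vz^*\|$ step and the final max-of-two-ratios conclusion are fine as you describe them.

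The one concrete mistake is the Young parameter. Copying $\epsilon=\gamma/(2\beta)$ from Lemma~\ref{lemma:nonexpansive} does \emph{not} work here: with that choice the cross term contributes $2\gamma\beta\|\nabla f(\vx)-\nabla f(\vx^*)\|^2$, which requires the \emph{entire} $2\gamma\langle\vx-\vx^*,\nabla f(\vx)-\nabla f(\vx^*)\rangle$ budget via cocoercivity to cancel, leaving nothing for the $\tau_f$ lower bound (and likewise nothing for $\tau_{l^*}$). The paper instead takes $\epsilon=1$, i.e.\ plain Cauchy--Schwarz
\[
2\gamma\langle \vz-\vz^+,\nabla f(\vx)-\nabla f(\vx^*)\rangle \le \|\vz-\vz^+\|^2+\gamma^2\|\nabla f(\vx)-\nabla f(\vx^*)\|^2,
\]
so the $-\|\vz-\vz^+\|^2$ term from~\eqref{eqn:fundamental_1} cancels exactly, and only $\gamma^2/\beta$ of the $2\gamma$ coefficient is needed (via~\eqref{eq:fcoco}) to kill $\gamma^2\|\nabla f(\vx)-\nabla f(\vx^*)\|^2$; the remaining $(2\gamma-\gamma^2/\beta)$ is then bounded below by $(2\gamma-\gamma^2/\beta)\tau_f\|\vx-\vx^*\|^2$. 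The $\nabla l^*$ block is handled identically (with $\|\cdot\|_\vM$ and $\|\cdot\|_{\vM^{-1}}$). Once you make this single change your sketch goes through verbatim, and the ``nonpositive leftover quadratics'' you worry about are in fact exactly zero, so no extra stepsize restriction beyond $\gamma<2\beta$ is needed.
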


\begin{proof}
Equation~\eqref{eqn:fundamental_1} in Lemma~\ref{lemma:fundamental} with $(\vz_1,\vs_1)=(\vz,\vs)$ and $(\vz_2,\vs_2)=(\vz^*,\vs^*)$ gives 

\begin{align*}
			&2\textstyle \gamma\langle \vs^+-\vs^*,\vq_{h^*}(\vs^+)-\vq_{h^*}^*+\nabla l^*(\vs)-\nabla l^*(\vs^*)\rangle \\
			& + 2\gamma\langle \vx-\vx^*,\vq_g(\vx)-\vq_g^*+\nabla f(\vx)-\nabla f(\vx^*)\rangle \\
	=   &\textstyle \|(\vz,\vs)-(\vz^*,\vs^*)\|_{\vI,\vM}^2 - \|(\vz^+,\vs^+)-(\vz^*,\vs^*)\|_{\vI,\vM}^2 - \|(\vz,\vs)-(\vz^+,\vs^+)\|_{\vI,\vM}^2 \\
	    &\textstyle + 2\gamma\langle \vz-\vz^+,\nabla f(\vx)-\nabla f(\vx^*)\rangle.
\end{align*}
Rearranging it, we have 
\begin{align*}
	    & \|(\vz^+,\vs^+)-(\vz^*,\vs^*)\|_{\vI,\vM}^2 \\
	=   & \|(\vz,\vs)-(\vz^*,\vs^*)\|_{\vI,\vM}^2 - \|(\vz,\vs)-(\vz^+,\vs^+)\|_{\vI,\vM}^2  + 2\gamma\langle \vz-\vz^+,\nabla f(\vx)-\nabla f(\vx^*)\rangle\\
			& - 2\gamma\langle \vs^+-\vs^*,\vq_{h^*}(\vs^+)-\vq_{h^*}^*+\nabla l^*(\vs)-\nabla l^*(\vs^*)\rangle \\
			& - 2\gamma\langle \vx-\vx^*,\vq_g(\vx)-\vq_g^*+\nabla f(\vx)-\nabla f(\vx^*)\rangle.
\end{align*}

The Cauchy-Schwarz inequality gives us 
\begin{align*}
 2\gamma\langle \vz-\vz^+,\nabla f(\vx)-\nabla f(\vx^*)\rangle \leq \|\vz-\vz^+\|^2+\gamma^2\|\nabla f(\vx)-\nabla f(\vx^*)\|^2,
\end{align*}
and the cocoercivity of $\nabla f$ shows
\begin{align*}
\textstyle -{\gamma^2\over \beta}\langle  \vx-\vx^*,\nabla f(\vx)-\nabla f(\vx^*)\rangle\leq -\gamma^2\|\nabla f(\vx)-\nabla f(\vx^*)\|^2.
\end{align*}
Combining the previous two inequalities, we have 
\begin{align*}
	    &  -\|\vz-\vz^+\|^2  + 2\gamma\langle \vz-\vz^+,\nabla f(\vx)-\nabla f(\vx^*)\rangle\\
			& - 2\gamma\langle \vx-\vx^*,\vq_g(\vx)-\vq_g^*+\nabla f(\vx)-\nabla f(\vx^*)\rangle\\
\leq  & \textstyle  - \left(2\gamma-{\gamma^2\over\beta}\right)  \langle  \vx-\vx^*,\nabla f(\vx)-\nabla f(\vx^*)\rangle - 2\gamma\langle \vx-\vx^*,\vq_g(\vx)-\vq_g^*\rangle \\
\leq  & \textstyle  - \left(\left(2\gamma-{\gamma^2\over\beta}\right)\tau_f+2\gamma \tau_g\right)  \|\vx-\vx^*\|^2.
\end{align*}
Similarly, we derive 
\begin{align*}
	    & - \|\vs-\vs^+\|_\vM^2 - 2\gamma\langle \vs^+-\vs^*,\vq_{h^*}(\vs^+)-\vq_{h^*}^*+\nabla l^*(\vs)-\nabla l^*(\vs^*)\rangle\\
=     & \textstyle  - \|\vs-\vs^+\|_\vM^2- 2\gamma\langle \vs^+-\vs,\nabla l^*(\vs)-\nabla l^*(\vs^*)\rangle \\
      &  -2\gamma\langle \vs-\vs^*,\nabla l^*(\vs)-\nabla l^*(\vs^*)\rangle - 2\gamma\langle \vs^+-\vs^*,\vq_{h^*}(\vs^+)-\vq_{h^*}^*\rangle \\
\leq  & \textstyle  - \left(2\gamma-{\gamma^2\over\beta}\right)  \langle  \vs-\vs^*,\nabla l^*(\vs)-\nabla f(\vs^*)\rangle - 2\gamma\langle \vs^+-\vs^*,\vq_{h^*}(\vs^+)-\vq_{h^*}^*\rangle \\
\leq  & \textstyle  - \left(2\gamma-{\gamma^2\over\beta}\right)\tau_{l^*}  \|\vs-\vs^*\|_\vM^2-2\gamma\tau_{h^*}\|\vs^+-\vs^*\|^2_\vM.
\end{align*}

Therefore, we have 
\begin{align*}
	    &\textstyle  \|(\vz^+,\vs^+)-(\vz^*,\vs^*)\|_{\vI,\vM}^2 \\
\leq  &\textstyle  \|(\vz,\vs)-(\vz^*,\vs^*)\|_{\vI,\vM}^2 - \left(\left(2\gamma-{\gamma^2\over\beta}\right)\tau_f+2\gamma\tau_g\right)\|\vx-\vx^*\|^2 \\
      &\textstyle  - \left(2\gamma-{\gamma^2\over\beta}\right)\tau_{l^*}\|\vs-\vs^*\|_\vM^2-2\gamma\tau_{h^*} \|\vs^+-\vs^*\|_\vM^2.
\end{align*}
Finally 
\begin{align*}
	    &\textstyle  \|\vz^+-\vz^*\|^2+\left(1+2\gamma\tau_{h^*}\right)\|\vs^+-\vs^*\|_\vM^2 \\
\leq  &\textstyle  \|\vz-\vz^*\|^2+\left(1- \left(2\gamma-{\gamma^2\over\beta}\right)\tau_{l^*}\right)\|\vs-\vs^*\|_\vM^2 - \left(\left(2\gamma-{\gamma^2\over\beta}\right)\tau_f+2\gamma\tau_g\right)\|\vx-\vx^*\|^2 \\
\leq  &\textstyle  \|\vz-\vz^*\|^2+\left(1- \left(2\gamma-{\gamma^2\over\beta}\right)\tau_{l^*}\right)\|\vs-\vs^*\|_\vM^2 - {\left(\left(2\gamma-{\gamma^2\over\beta}\right)\tau_f+2\gamma\tau_g\right)\over 1+\gamma L_g}\|\vz-\vz^*\|^2\\
\leq  &\textstyle  \rho \left(\|\vz-\vz^*\|^2+(1+2\gamma\tau_{h^*})\|\vs-\vs^*\|_\vM^2\right),
\end{align*}
with $\rho$ defined in~\eqref{eqn:rho}. \qed
\end{proof}

\section{Numerical experiments} \label{sec:numerical}

In this section, we compare PD3O with PDFP, AFBA, and Condat-Vu in solving the fused lasso problem~\eqref{eqn:fusedlasso}. 
Note that many other algorithms can be applied to solve this problem. 
For example, the proximal mapping of $\mu_2\|\vD \vx\|_1$ can be computed exactly and quickly~\cite{condat2013direct}, and Davis-Yin can be applied directly without using the primal-dual algorithms. 
Even for primal-dual algorithms, there are accelerated versions available~\cite{chambolle2016ergodic}.
However, it is not the focus of this paper to compare PD3O with all existing algorithms for solving the fused lasso problem. 
The numerical experiment in this section is used to validate and demonstrate the advantages of PD3O over other existing unaccelerated primal-dual algorithms: PDFP, AFBA, and Condat-Vu. 
More specifically, we would like to show the advantage of using a larger stepsize. 

Here, only the number of iterations is recorded because the computational cost for each iteration is very close for all four algorithms. 
In this example, the proximal mapping of $\mu_1\|\vx\|_1$ is easy to compute, and the additional cost of one proximal mapping in PDFP can be ignored.
The code for all the comparisons in this section is available at~\href{http://github.com/mingyan08/PD3O}{http://github.com/mingyan08/PD3O}.

We use the same setting as~\cite{chen2016primal}. 
Let $n=500$ and $p=10000$. 
$\vA$ is a random matrix whose elements follow the standard Gaussian distribution, and $\vb$ is obtained by adding independent and identically distributed Gaussian noise with variance 0.01 onto $\vA\vx$.
For the parameters, we set $\mu_1=20$ and $\mu_2=200$.

\begin{figure}[!h]
\begin{center}
\includegraphics[width=0.49\textwidth]{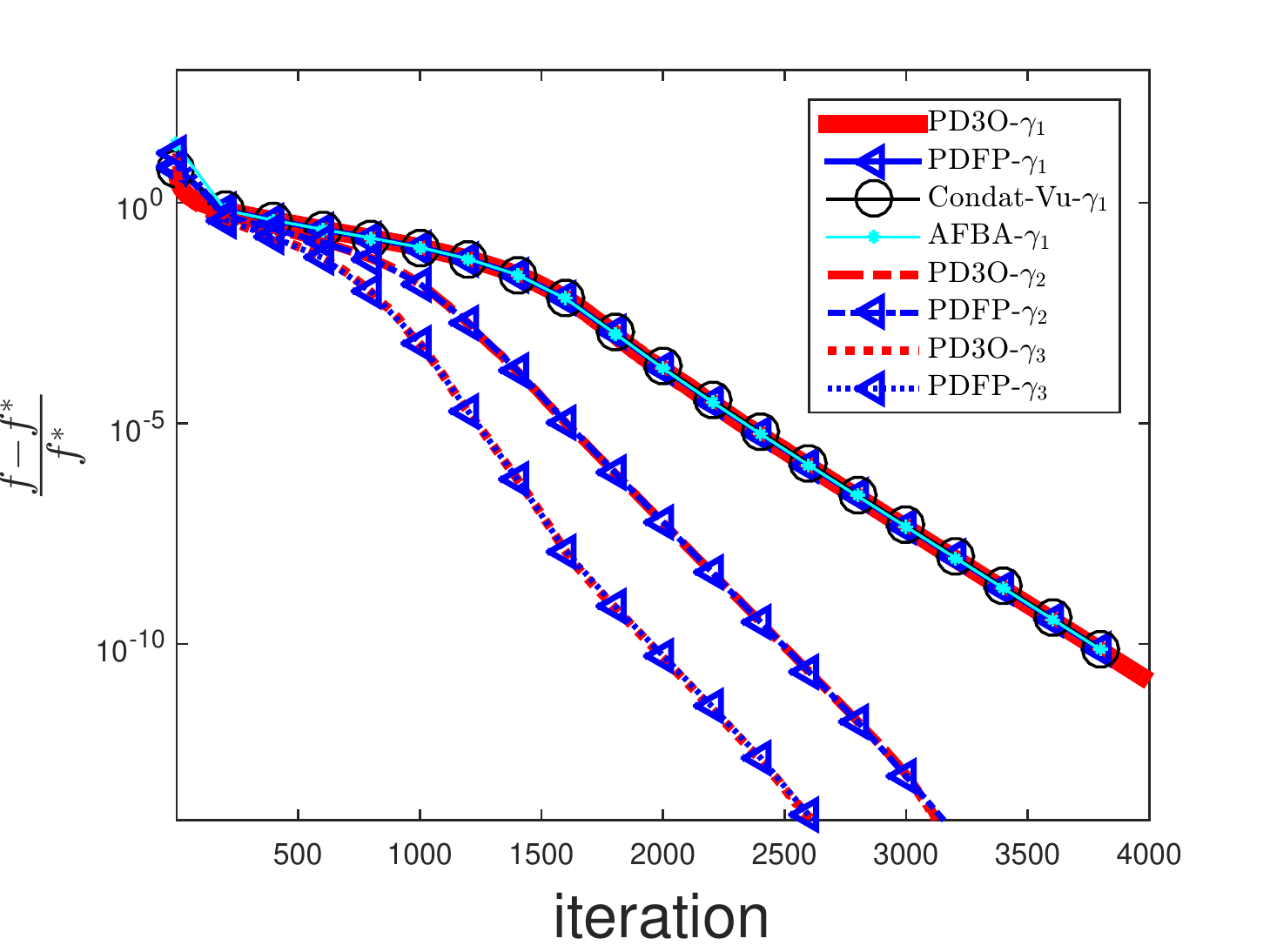}\includegraphics[width=0.49\textwidth]{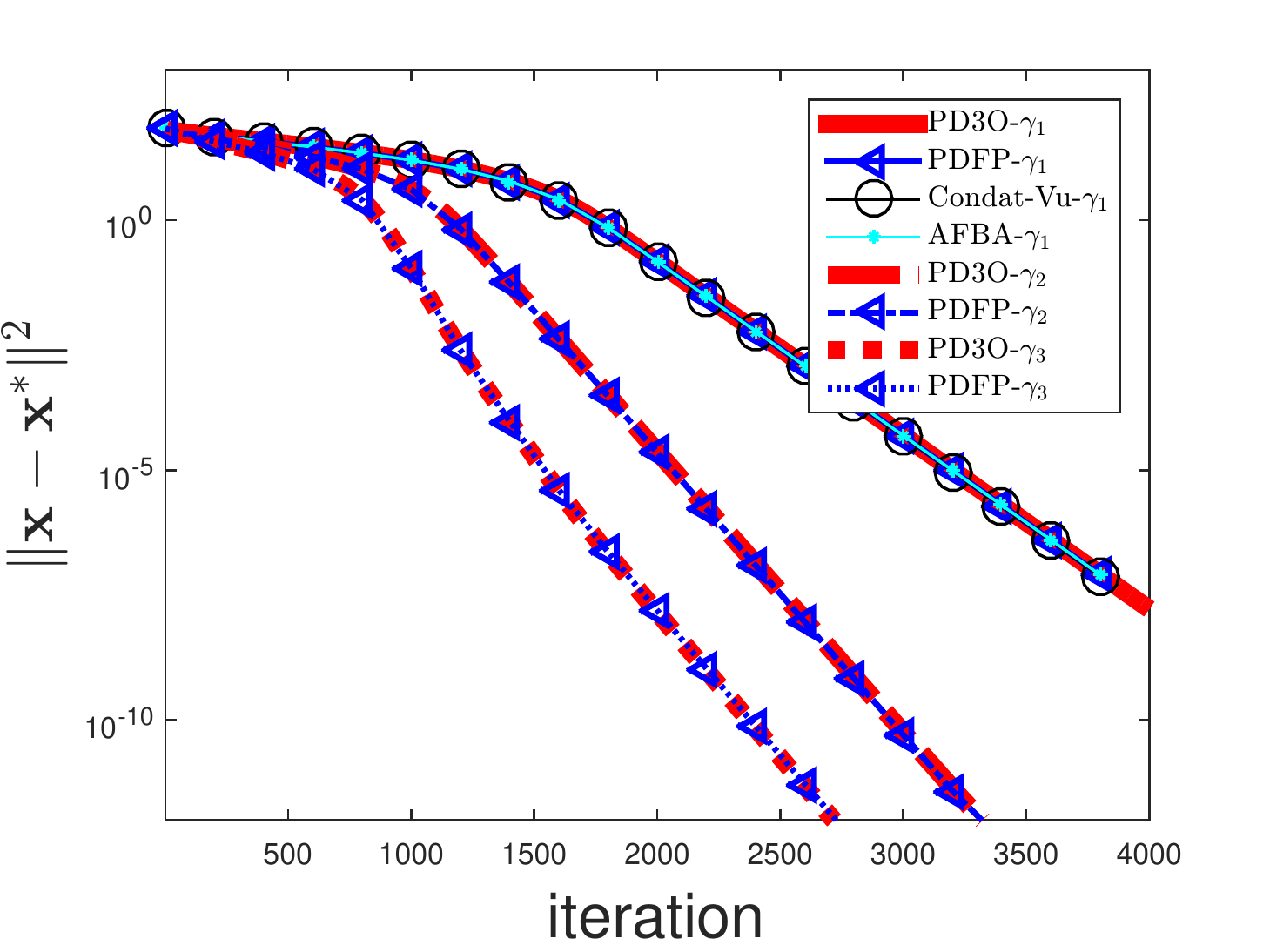}
\includegraphics[width=0.49\textwidth]{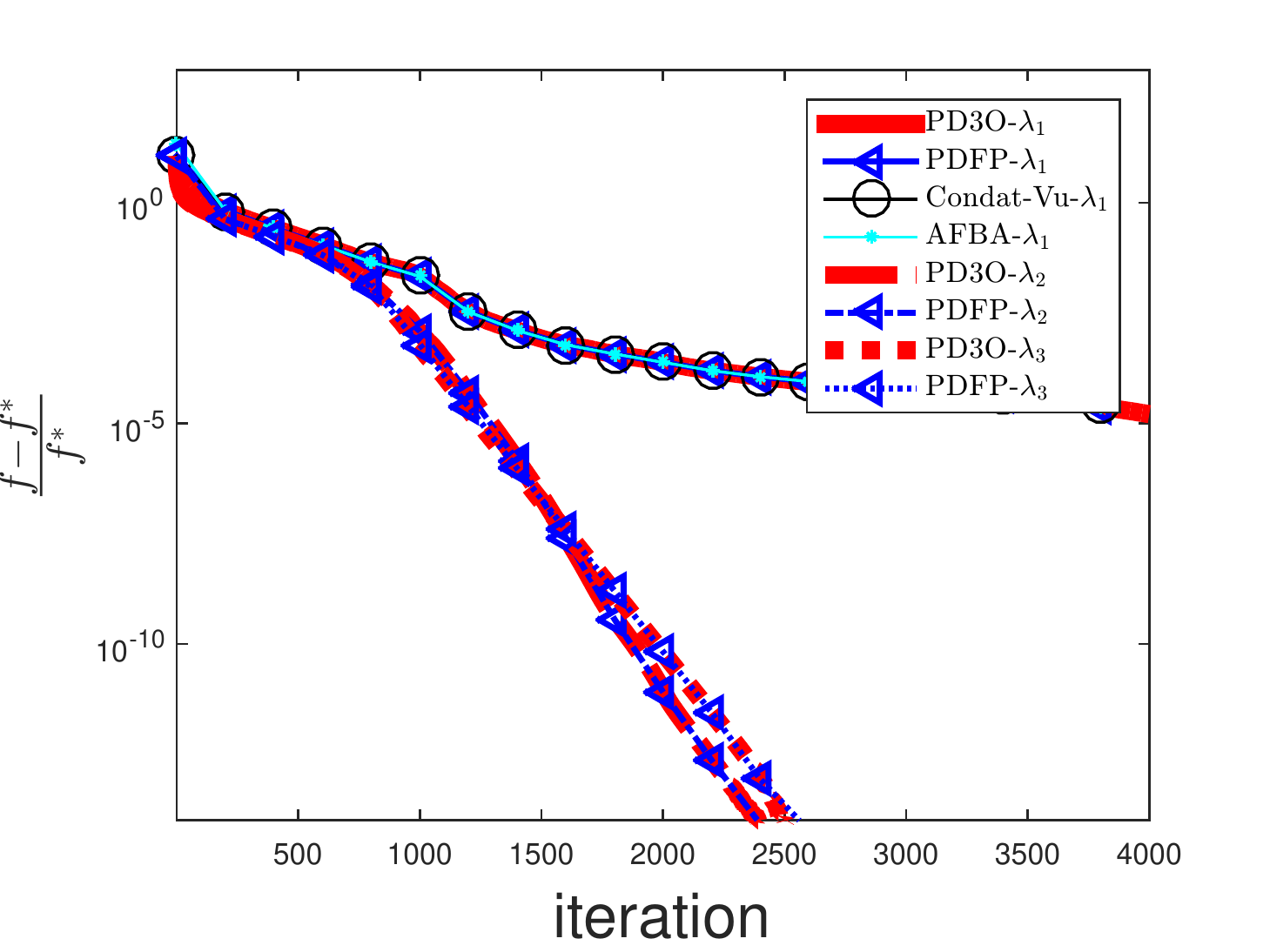}\includegraphics[width=0.49\textwidth]{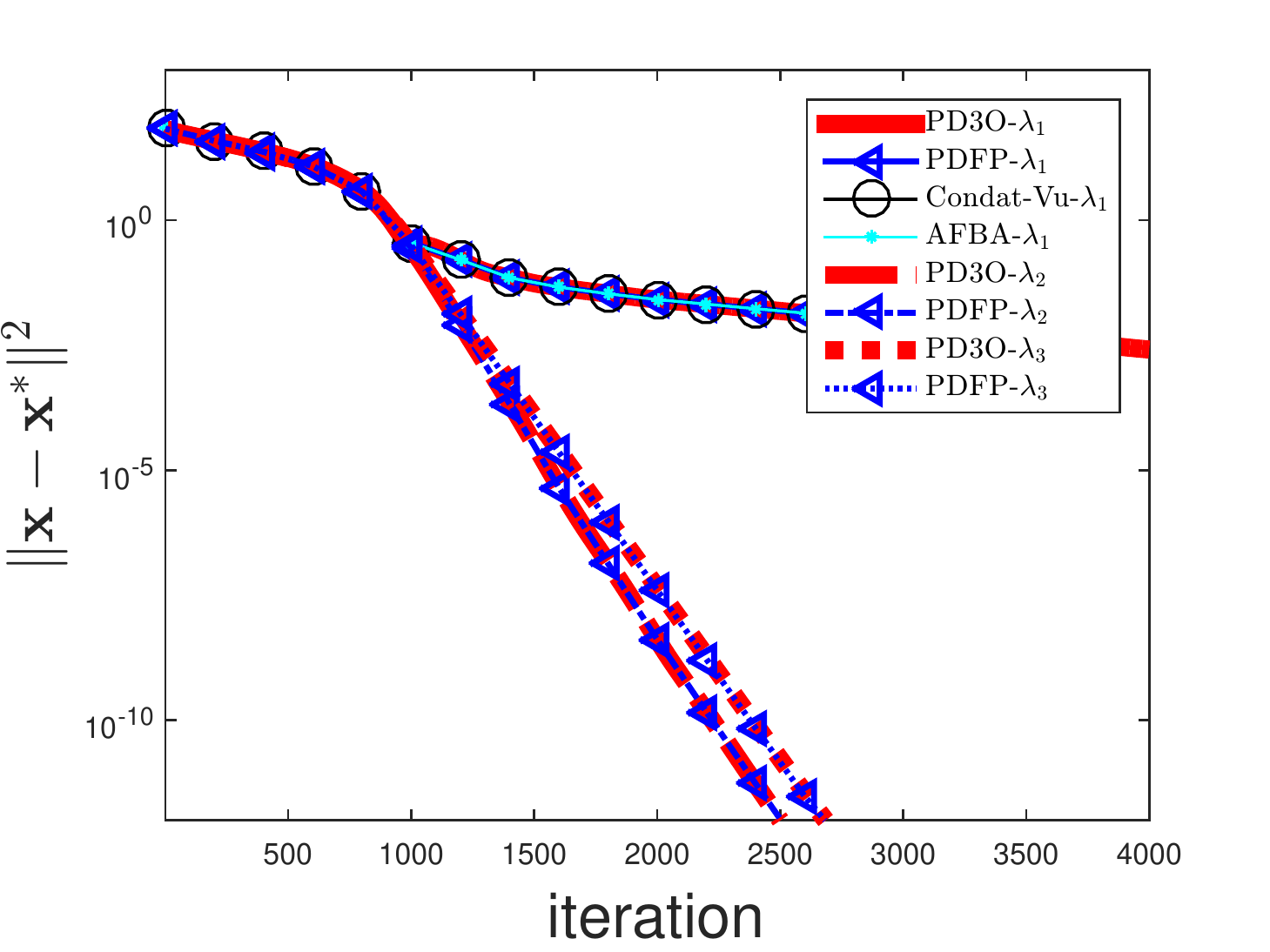}
\caption{The comparison of four algorithms (PD3O, PDFP, Condat-Vu, and AFBA) on the fused lasso problem in terms of primal objective values and the distances of $\vx^k$ to the optimal solution $\vx^*$ with respect to iteration numbers. In the top figures, we fix $\lambda=1/8$ and let $\gamma=\beta,~1.5\beta,~1.99\beta$. In the bottom figures, we fix $\gamma=1.9\beta$ and let $\lambda = 1/80,~1/8,~1/4$. PD3O and PDFP perform better than Condat-Vu and AFBA because they have larger ranges for acceptable parameters and choosing large numbers for both parameters makes the algorithm converge fast. Note PD3O has a slightly lower per-iteration complexity than PDFP.}%
\label{fig:flasso}%
\end{center}
\end{figure}

We would like to compare the four algorithms with different parameters, and the result may guide us in choosing parameters for these algorithms in other applications. 
Here we let $\lambda=\gamma\delta$.
Then recall that the parameters for Condat-Vu and AFBA have to satisfy $\lambda \left(2-2\cos ({p-1\over p}\pi)\right)+\gamma/(2\beta) \leq 1$ and $\lambda \left(1-\cos ({p-1\over p}\pi)\right)+\sqrt{\lambda (1-\cos ({p-1\over p}\pi))}/2+\gamma/(2\beta) \leq 1$, respectively, and those for PD3O and PDFP have to satisfy $\lambda \left(2-2\cos ({p-1\over p}\pi)\right) \leq 1$ and $\gamma< 2\beta$.
Firstly, we fix $\lambda=1/8$ and let $\gamma=\beta,~1.5\beta,~1.99\beta$. 
For Condat-Vu and AFBA, we only show $\gamma=\beta$ because $\gamma=1.5\beta$ and $1.99\beta$ do not satisfy their convergence conditions.
The primal objective values and the distances of $\vx^k$ to the optimal solution $\vx^*$ for these algorithms after each iteration are compared in~Fig.~\ref{fig:flasso} (Top). 
The optimal objective value $f^*$ is obtained by running PD3O for 20,000 iterations. 
The results show that all four algorithms have very close performance when they converge ($\gamma=\beta$). 
Both PD3O and PDFP converge faster with a larger stepsize $\gamma$. 
In addition, the figure shows that the speed of all four algorithms almost linearly depends on the primal stepsize $\gamma$, i.e., increasing $\gamma$ by two reduces the number of iterations by half.

Then we fix $\gamma=1.9\beta$ and let $\lambda = 1/80,~1/8,~1/4$. The objective values and the distances to the optimal solution for these algorithms after each iteration are compared in~Fig.~\ref{fig:flasso} (Bottom). 
Again, we can see that the performances for these three algorithms are very close when they converge ($\lambda=1/80$), and PD3O is slightly better than PDFP in terms of the number of iterations and the per-iteration complexity. 
However, when $\lambda$ changes from $1/8$ to $1/4$, there is no improvement in the convergence, and when the number of iterations is more than 2000, the performance of $\lambda=1/4$ is even worse than that of $\lambda=1/8$. 
This result also suggests that it is better to choose a slightly large $\lambda$  and the increase in $\lambda$ does not bring too much advantage if $\lambda$ is large enough (at least in this experiment). 
Both experiments demonstrate the effectiveness of having a large range of acceptable parameters.

\section{Conclusion}\label{sec:conclusion}

In this paper, we proposed a primal-dual three-operator splitting scheme PD3O for minimizing $f(\vx)+g(\vx)+h\square l(\vA\vx)$. 
It has primal-dual algorithms Chambolle-Pock and PAPC for minimizing the sum of two functions as special cases. 
Comparing to the three existing primal-dual algorithms PDFP, AFBA, and Condat-Vu for minimizing the sum of three functions, PD3O has the advantages from all these algorithms: a low per-iteration complexity and a large range of acceptable parameters ensuring the convergence. 
In addition, PD3O reduces to Davis-Yin when $\vA=\vI$ and special parameters are chosen.
The numerical experiments show the effectiveness and efficiency of PD3O. 
We left the acceleration and other modifications of PD3O as future work.

\section*{Acknowledgments.}
We would like to thank the anonymous reviewers for their helpful comments.

\bibliographystyle{spmpsci} 
\bibliography{PM3O} 

\end{document}